\documentclass[11pt,final]{amsart}
\usepackage[toc,page]{appendix}
\usepackage[active]{srcltx}
\usepackage[colorlinks=true,urlcolor=blue,
citecolor=red,linkcolor=blue,linktocpage,pdfpagelabels,bookmarksnumbered,bookmarksopen]{hyperref}
\usepackage{marginnote,soul}
\usepackage[left=2.95cm,right=2.95cm,top=2.8cm,bottom=2.8cm]{geometry}

\numberwithin{equation}{section}
\usepackage{mathrsfs}
\usepackage{color}
\usepackage{amsmath,bbm}
\usepackage{amsfonts}
\usepackage{amssymb}
\usepackage{graphicx}
\usepackage{tikz,caption}
\providecommand{\U}[1]{\protect\rule{.1in}{.1in}}
\definecolor{linkcolor}{rgb}{0.00,0.50,0.00}
\providecommand{\U}[1]{\protect\rule{.1in}{.1in}}
\textwidth 16.5cm \textheight 21cm \topmargin 0cm
\oddsidemargin 0.5cm
\evensidemargin 0cm

\def\dd{\,{\rm d}}
\newcommand{\adm}{\rm{adm}}
\newcommand{\press}{\rm{press}}
\newcommand{\argmin}{\rm{argmin}}
\newcommand{\id}{\rm{id}}
\newcommand{\Lip}{\rm{Lip}}


\def\d{\delta}
\def\g{\gamma}

\def\vphi{\varphi}

\def\ro{\varrho}

\def\t{\tau}
\def\e{\varepsilon}

\def\Om{\Omega}


\newcommand{\cB}{{\mathcal B}}

\newcommand{\cE}{{\mathcal E}}
\newcommand{\cH}{{\mathcal H}}

\newcommand{\cL}{{\mathcal L}}
\newcommand{\cK}{{\mathcal K}}

\newcommand{\cP}{{\mathcal P}}

\newcommand{\pical}{{\mathcal P}}


\newcommand{\fM}{{\mathfrak M}}


\newcommand{\ds}{\displaystyle}
\newcommand{\weak}{\rightharpoonup}
\newcommand{\weaks}{\stackrel{*}{\rightharpoonup}}
\newcommand{\ov}{\overline}
\renewcommand{\div}{\nabla\cdot}

\renewcommand{\P}{\mathcal{P}}
\newcommand{\R}{\mathbb{R}}
\newcommand{\T}{\mathbb{T}}
\renewcommand{\O}{\Omega}
\renewcommand{\rm}{\mathrm}

\tikzset { domaine/.style 2 args={domain=#1:#2} }
\tikzset{
xmin/.store in=\xmin, xmin/.default=-3, xmin=-3,
xmax/.store in=\xmax, xmax/.default=3, xmax=3,
ymin/.store in=\ymin, ymin/.default=-3, ymin=-3,
ymax/.store in=\ymax, ymax/.default=3, ymax=3,
}

\newcommand {\axes} {
\draw[->] (\xmin,0) -- (\xmax,0);
\draw[->] (0,\ymin) -- (0,\ymax);
}
\newcommand {\fenetre}
{\clip (\xmin,\ymin) rectangle (\xmax,\ymax);}

\newcommand{\one}{\mathbbm{1}}

\title[Diffusive evolution under density constraints]{Advection-diffusion equations\\ with density constraints}
\author{Alp\'ar Rich\'ard M\'esz\'aros}
\address{Laboratoire de Math\'ematiques d'Orsay, Univ. Paris-Sud, CNRS, Universit\'e Paris-Saclay, 91405 Orsay cedex, France}
\address[Current address] {Department of Mathematics, University of California at Los Angeles,  520 Portola Plaza, 
 Los Angeles, 90095, California,  USA}
\email[A.R. M\'esz\'aros]{alpar@math.ucla.edu}

\author{Filippo Santambrogio}
\address{Laboratoire de Math\'ematiques d'Orsay, Univ. Paris-Sud, CNRS, Universit\'e Paris-Saclay, 91405 Orsay cedex, France}
\email[F. Santambrogio]{filippo.santambrogio@math.u-psud.fr}
\date{\today}
\thanks{{\it Key words and phrases}: diffusive crowd motion model; Fokker-Planck equation; density constraints; optimal transportation}
\thanks{{\it Mathematics Subject Classification:} 35K61; 49D10; 49J45}

\begin{document}
\newtheorem{theorem}{Theorem}[section]
\newtheorem{lemma}[theorem]{Lemma}
\newtheorem{corollary}[theorem]{Corollary}

\theoremstyle{definition}
\newtheorem{definition}[theorem]{Definition}
\newtheorem{example}[theorem]{Example}
\newtheorem{xca}[theorem]{Exercise}
\theoremstyle{remark}
\newtheorem{remark}[theorem]{Remark}
\numberwithin{equation}{section}

\thispagestyle{plain}
\maketitle

\allowdisplaybreaks

\begin{abstract} In the spirit of the macroscopic crowd motion models with hard congestion (i.e. a strong density constraint $\rho\leq 1$) introduced by Maury {\it et al.} some years ago, we analyze a variant of the same models where diffusion of the agents is also taken into account. From the modeling point of view, this means that individuals try to follow a given spontaneous velocity, but are subject to a Brownian diffusion, and have to adapt to a density constraint which introduces a pressure term affecting the movement. From the PDE point of view, this corresponds to a modified Fokker-Planck equation, with an additional gradient of a pressure (only living in the saturated zone $\{\rho=1\}$) in the drift. The paper proves existence and some estimates, based on optimal transport techniques. 
\end{abstract}

\section{Introduction}
In the past few years modeling crowd behavior has become a very active field of applied mathematics.  Beyond their importance in real life applications, these modeling problems serve as basic ideas to understand many other phenomena coming for example from biology (cell migration, tumor growth, pattern formations in animal populations, etc.), particle physics and economics. A first non-exhaustive list of references for these problems is \cite{Cha1, Col, Cos, CriPicTos, Dog, Helb1, Helb3, Hug1, Hug2, crowd1}. A very natural question in all these models is the problem of congestion phenomenon: in many practical situations, very high quantities of individuals could try to occupy the same spot, which could be impossible, or lead to strong negative effects on the motion, because of natural limitations on the crowd density. 

These phenomena have been studied by using different models, which could be either ``microscopic'' (based on ODEs on the motion of a high number of agents) or ``macroscopic'' (describing the agents via their density and velocity, typically with Eulerian formalism). Let us concentrate on the macroscopic models, where the density $\rho$ plays a crucial role. These very same models can be characterized either by ``soft congestion'' effects (i.e. the higher the density the slower the motion), or by ``hard congestion'' (i.e. an abrupt threshold effect: if the density touches a certain maximal value, the motion is strongly affected, while nothing happens for smaller values of the density). See \cite{MauRouSan1} for comparison between the different classes of models. This last class of models, due to the discontinuity in the congestion effects, presents new mathematical difficulties, which cannot be analyzed with the usual techniques from conservation laws (or, more generally, evolution PDEs) used for soft congestion.
 
A very powerful tool to attack macroscopic hard-congestion problems is the theory of optimal transportation (see \cite{villani,OTAM}), as we can see in \cite{MauRouSan2, MauRouSan1, aude_phd, xedp}. In this framework, the density of the agents solves a continuity equation (with velocity field taking into account the congestion effects), and can be seen as a curve in the Wasserstein space. 

Our aim in this paper is to endow the macroscopic hard congestion models of  \cite{MauRouSan2, MauRouSan1, aude_phd, xedp} with diffusion effects. In other words, we will study an evolution equation where particles 
\begin{itemize}
\item have a spontaneous velocity field $u_t(x)$ which depends on time and on their position, and is the velocity they would follow in the absence of the other particles,
\item must adapt their velocity to the existence of an incompressibility constraint which prevents the density to go beyond a given threshold,
\item are subject to some diffusion effect.
\end{itemize}
This can be considered as a model for a crowd where a part of the motion of each agent is driven by a Brownian motion. Implementing this new element into the existing models could give a better approximation of reality: as usual when one adds a stochastic component, this can be a (very) rough approximation of unpredictable effects which are not already handled by the model, and this could work well when dealing with large populations.

Anyway, we do not want to discuss here the validity of this hard-congestion model and we are mainly concerned with its mathematical analysis. In particular, we will consider existence and regularity estimates, while we do not treat the uniqueness issue. Uniqueness is considered in a recent work of the first author in collaboration with S. Di Marino, see \cite{DiMMes}, and one can observe that the insertion of diffusion dramatically simplifies the picture as far as uniqueness is concerned. 

We also underline that one of the goals of the current paper (and of  \cite{DiMMes}) is to better ``prepare'' these hard congestion crowd motion models for a possible analysis in the framework of Mean Field Games (see \cite{lasry1, lasry2, lasry3}, and also \cite{modest}). These MFG models usually involve a stochastic term, also implying regularizing effects, that are useful in the mathematical analysis of the corresponding PDEs.

\subsection{The existing first order models in the light of \cite{MauRouSan2,MauRouSan1}}

Some macroscopic models for crowd motion with density constraints and ``hard congestion'' effects were studied in \cite{MauRouSan1} and \cite{MauRouSan2}. We briefly present them as follows:

\begin{itemize}
\item The density of the population in a bounded (convex) domain $\O\subset\R^d$ is described by a probability measure $\rho\in\P(\O).$ The initial density $\rho_0\in\cP(\Om)$ evolves in time, and $\rho_t$ denotes its value at each time $t\in[0,T]$.
\item The spontaneous velocity field of the population is a given time-dependent field, denoted by $u_t.$ It represents the velocity that each individual would like to follow in the absence of the others. Ignoring the density constraint, this would give rise to the continuity equation $\partial_t\rho_t+\nabla\cdot\left(\rho_t u_t\right)=0$. We observe that in the original work \cite{MauRouSan2} the vector field $u_t(x)$ was taken of the form $-\nabla D(x)$ (independent of time and of gradient form) but we try here to be more general (see \cite{aude_phd} where the non-gradient case is studied under some stronger regularity assumptions).

\item The set of admissible densities will be denoted by $\cK:=\{\rho\in\P(\O):\rho\le1\}.$ In order to guarantee that $\cK$ is neither empty nor trivial, we suppose $|\O|>1$.

\item The set of admissible velocity fields with respect to the density $\rho$ is characterized by the sign of the divergence of the velocity field on the saturated zone. We need to suppose also that all admissible velocity fields are such that no mass exists from the domain. So
formally we set $$\adm(\rho):=\left\{v:\O\to\R^d:\nabla\cdot v\ge 0\ {\rm{on}}\ \{\rho=1\}\ {\rm{and}}\ v\cdot n\le 0\ {\rm{on}}\ \partial\Om\right\}.$$

\item We consider the projection operator $P$ in  $L^2(\mathcal{L}^d)$:
$$\ds P_{\adm(\rho)}[u]\in{\rm{argmin}}_{v\in\adm(\rho)}\int_\Om|u-v|^2\dd x.$$
Note that we could have used the Hilbert space $L^2(\rho)$ instead of $L^2(\mathcal{L}^d)$: this would be more natural in this kind of evolution equations, as $L^2(\rho)$ is interpreted in a standard way as the tangent space to the Wasserstein space $\mathcal{W}_2(\Omega)$. Yet, these two projections turn out to be the same in this case, as the only relevant zone is $\{\rho=1\}$. This is just formal, and would require more rigorous definitions (in particular of the divergence constraint in $\adm(\rho)$, see below). Anyway, to clarify, we choose to use the $L^2(\mathcal{L}^d)$-projection: in this way the vector fields are considered as defined Lebesgue-a.e. on the whole $\Omega$ (and not only on $\{\rho>0\}$) and the dependence of the projected vector field on $\rho$ only passes through the set $\adm(\rho)$.
\item Finally we solve the following modified continuity equation for $\rho$
\begin{equation}\label{continuity}
\partial_t\rho_t+\nabla\cdot\left(\rho_t P_{\adm(\rho_t)}[u_t]\right)=0,
\end{equation}
where the main point is that $\rho$ is advected by a vector field, compatible with the constraints, which is the closest to the spontaneous one.
\end{itemize}

The problem in solving Equation \eqref{continuity} is that the projected field has very low regularity: it is a priori only $L^2$ in $x$, and it does not depend smoothly on $\rho$ either (since a density $1$ and a density $1-\e$ give very different projection operators). By the way, its divergence is not well-defined either. To handle this issue we need to redefine the set of admissible velocities by duality. Taking a test function $p\in H^1(\Om),\ p\ge 0\ {\rm{a.e.}}$, we obtain by the integration-by-parts equality
$$\int_\Om v\cdot\nabla p\dd x=- \int_{\Om}(\nabla\cdot v) p\dd x+\int_{\partial\Om}p v\cdot n\dd\cH^{d-1}(x).$$ 
For vector fields $v$ which do not let mass go through the boundary $\partial\Omega$ we have (in an a.e. sense) $v\cdot n=0$. This leads to the following definition
$$\adm(\rho)=\left\{v\in L^2(\Om;\R^d):\int_\O v\cdot\nabla p\ \dd x\le 0,\ \forall p\in H^1(\O),p\ge0, p(1-\rho)=0\ {\rm{a.e.}}\right\},$$
(indeed, for smooth vector field with vanishing normal component on the boundary, this is equivalent to imposing $\nabla\cdot v\geq 0$ on the set $\{\rho=1\}$).

Now, if we set 
$$\press(\rho):=\left\{p\in H^1(\Om):p\ge0,\ p(1-\rho)=0\ {\rm{a.e.}}\right\},$$ 
we observe that, by definition, $\adm(\rho)$ and $\nabla \press(\rho)$ are two convex cones which are dual to each other in $L^2(\Om;\R^d)$.
Hence we always have a unique orthogonal decomposition 
\begin{equation}\label{decomposition}
u=v+\nabla p,\quad v\in \adm(\rho);\;p\in \press(\rho),\quad \int_\O v\cdot\nabla p\dd x=0.
\end{equation}
In this decomposition (as it is the case every time we decompose on two dual convex cones), $v=P_{\adm(\rho)}[u]$. These will be our mathematical definitions for $\adm(\rho)$ and for the projection onto this cone. 


Via this approach (introducing the new variable $p$ and using its characterization from the previous line), for a given desired velocity field $u:[0,T]\times\Om\to\R^d,$ the continuity equation \eqref{continuity} can be rewritten as a system for the pair of variables $(\rho,p)$ which is 
\begin{equation}\label{syst with press}
\left\{
\begin{array}{ll}
\partial_t\rho_t+\nabla\cdot\left(\rho_t(u_t-\nabla p_t)\right)=0, &{\rm{in}}\;\; [0,T]\times\Omega,\\
p\ge0,\ \rho\leq 1,\ p(1-\rho)=0,&{\rm{in}}\;\;[0,T]\times\Omega,\\
\rho_t(u_t-\nabla p_t)\cdot n=0,&{\rm{on}}\;\;[0,T]\times\partial\Omega.
\end{array}
\right.
\end{equation}
This system is endowed with the initial condition $\rho(0,x)=\rho_0(x)$ ($\rho_0\in \cK$). As far as the spatial boundary $\partial\Omega$ is concerned, we put no-flux boundary conditions to preserve the mass in $\Omega$.

Note that in the above system we withdrew the condition $ \int (u_t-\nabla p_t)\cdot\nabla p_t=0$, as it is a consequence of the system \eqref{syst with press} itself. Informally, this can be seen in the following way: for an arbitrary $p_0\in\press(\rho_{t_0})$, we have that $t\mapsto \int_\O p_0\rho_t$ is maximal at $t=t_0$ (where it is equal to $\int_\O p_0$). Differentiating this quantity w.r.t. $t$ at $t=t_0$, using the equation \eqref{syst with press}, we get the desired orthogonality condition at $t=t_0$. For a rigorous proof of this fact (which holds for a.e. $t_0$), we refer to Proposition 4.7 in \cite{DMS}.
\subsection{A diffusive counterpart}
The goal of our work is to study a second order model of crowd movements with hard congestion effects where beside the transport factor a non-degenerate diffusion is present as well. The diffusion is the consequence of a randomness  (a Brownian motion) in the movement of the crowd. 

With the ingredients that we introduced so far, we will modify the Fokker-Planck equation $\partial_t\rho_t-\Delta\rho_t+\nabla\cdot\left(\rho_t u_t\right)=0$ in order to take into account the density constraint $\rho_t\leq 1$. Assuming enough regularity for the velocity field $u$, we observe that the Fokker Planck equation is derived from a motion given by the SODE $\dd X_t=u_t(X_t)\dd t+\sqrt{2}\dd B_t$ (where $B_t$ is the standard $d$-dimensional Brownian motion), but is macroscopically represented by the advection of the density $\rho_t$ by the vector field $-\nabla \rho_t/\rho_t+u_t$. Projecting onto the set of admissible velocities raises a natural question: should we project only $u_t$, and {\it then} apply the diffusion, or project the whole vector field, including $-\nabla \rho_t/\rho_t$? But this is not a real issue, since, at least formally, $\nabla\rho_t/\rho_t=0$ on the saturated set $\{\rho_t=1\}$ and $P_{\adm(\rho_t)}[-\nabla\rho_t/\rho_t+u_t]=P_{\adm(\rho_t)}[-\nabla\rho_t/\rho_t]+P_{\adm(\rho_t)}[u_t]=0+P_{\adm(\rho_t)}[u_t]$. Rigorously, this corresponds to the fact that the Heat Kernel preserves the constraint $\rho\leq 1$. As a consequence, we consider the modified Fokker-Planck type equation
\begin{equation}\label{fokker}
\partial_t\rho_t-\Delta\rho_t+\nabla\cdot\left(\rho_t P_{\adm(\rho_t)}[u_t]\right)=0,
\end{equation}
which can also be written equivalently for the variables $(\rho,p)$ as  
\begin{equation}\label{fokker2}
\left\{
\begin{array}{ll}
\partial_t\rho_t-\Delta\rho_t+\nabla\cdot\left(\rho_t(u_t-\nabla p_t)\right)=0, &{\rm{in}}\;\;[0,T]\times\Omega,\\
p\ge0,\ \rho\leq 1, \ p(1-\rho)=0,&{\rm{in}}\;\;[0,T]\times\Omega.
\end{array}
\right.
\end{equation}
As usual, these equations are complemented by no-flux boundary conditions and by an initial datum $ \rho(0,x)=\rho_0(x)$.

Roughly speaking, we can consider that this equation describes the law of a motion where each agent solves the stochastic differential equation
$$\dd X_t=(u_t(X_t)-\nabla p_t(X_t))\dd t+\sqrt{2}\dd B_t.$$
This last statement is just formal and there are several issues defining an SODE like this: indeed, the pressure variable is also an unknown, and globally depends on the law $\rho_t$ of $X_t$. Hence, if we wanted to see this evolution as a superposition of individual motions, each agent should somehow predict the evolution of the pressure in order to solve his own equation. This reminds of some notions from the stochastic control formulation of Mean-Field Games, as introduced by J.-M. Lasry and P.-L. Lions, even if here there are no strategic issues for the players. For MFG with density constraints, we refer to 
\cite{CarMesSan,  MesSil, modest}.

However, in this paper we will not consider any microscopic or individual problem, but only study the parabolic PDE \eqref{fokker2}.

\subsection{Structure of the paper and main results}

The main goal of the paper is to provide an existence result, with some extra estimates, for the Fokker-Planck equation \eqref{fokker2} via time discretization, using the so-called splitting method (the two main ingredients of the equation, i.e. the advection with diffusion on one hand, and the density constraint on  the other hand, are treated one after the other). In Section \ref{sec:2} we will collect some preliminary results, including what we need from optimal transport and from the previous works about density-constrained crowd motion, in particular on the projection operator onto the set $\cK$. In Section \ref{sec:main} we will provide the existence result we aim at, by a splitting scheme and some entropy bounds; the solution will be a curve of measures in $AC^2([0,T];\mathcal W_2(\O))$ (absolutely continuous curves with square-integrable speed). In Section \ref{sec:bv} we will make use of $BV$ estimates to justify that the solution we just built is also $\mathrm{Lip}([0,T];\mathcal W_1(\O))$ and satisfies a global $BV$ bound $\|\rho_t\|_{BV}\leq C$ (provided $\rho_0\in BV$): this requires to combine $BV$ estimates on the Fokker-Planck equation (which are available depending on the regularity of the vector field $u$) with $BV$ estimates on the projection operator on $\cK$ (which have been recently proven in \cite{gafb}). Section \ref{sec:5} presents a short review of alternative approaches, all discretized in time, but based either on gradient-flow techniques (the JKO scheme, see \cite{jko}) or on different splitting methods. Finally, in the Appendix \ref{sec:app} we detail the $BV$ estimates on the Fokker-Planck equation (without any density constraint) that we could find; this seems to be a delicate matter, interesting in itself, and we are not aware of the sharp assumptions on the vector field $u$ to guarantee the $BV$ estimate that we need.

\section{Preliminaries}\label{sec:2}

\subsection{Basic definitions and general facts on optimal transport}

Here we collect some tools from the theory of optimal transportation, Wasserstein spaces, its dynamical formulation, etc. which will be used later on. We set our problem either in a compact convex domain $\Om\subset\R^d$ with smooth boundary or in the $d-$dimensional flat torus $\Om:=\T^d$ (even if we will not adapt all our notations to the torus case). We refer to \cite{villani,OTAM} for more details. Given two probability measures $\mu,\nu\in \cP(\Om)$ and for $p\ge1$ we define the usual Wasserstein metric by means of the Monge-Kantorovich optimal transportation problem
$$W_p(\mu,\nu):=\inf\left\{ \int_{\Om\times\Om}|x-y|^p\dd\g(x,y)\;:\;\g\in\Pi(\mu,\nu)\right\}^{\frac1p},$$ 
where $\Pi(\mu,\nu):=\{\g\in\cP(\Om\times\Om):\;\; (\pi^x)_\#\g=\mu,\; (\pi^y)_\#\g=\nu\}$ and $\pi^x$ and $\pi^y$ denote the canonical projections from $\Om\times\Om$ onto $\Om.$  This quantity happens to be a distance on $\pical(\Om)$ which metrizes the weak-$*$ convergence of probability measures; we denote by $\mathcal W_p(\Om):=(\cP(\Om),W_p),$ i.e. the space of probabilities on $\Om$ endowed with this distance. 

Moreover, in the quadratic case $p=2$ and under the assumption $\mu\ll\cL^d$ (the $d-$dimensional Lebesgue measure on $\Om$) in the late 80's Y. Brenier showed (see \cite{brenier1, brenier2}) that actually the optimal $\ov\g$ in the above problem (the existence of which is obtained simply by the direct method of calculus of variations) is induced by a map, which is the gradient of a convex function, i.e. there exists $S:\O\to\O$ and $\psi:\Om\to\R$ convex such that $S=\nabla \psi$ and $\ov\g:=(\id,S)_\#\mu.$ The function $\psi$ is obtained as $\ds\psi(x)=\mbox{{\small $\frac 12$}} |x|^2-\varphi(x)$, where $\varphi$ is the so-called Kantorovich potential for the transport from $\mu$ to $\nu$, and is characterized as the solution of a dual problem that we will not develop here. In this way, the optimal transport map $S$ can also be written as $S(x)=x-\nabla\varphi(x)$. Later in the 90's R. McCann  (see \cite{mccann}) introduced a notion of interpolation between probability measures: the curve $\mu_t:=\left( (T-t)x+ty\right)_\#\ov\g,$ for $t\in[0,T]$ ($T>0$ is given), gives a constant speed geodesic in the Wasserstein space connecting $\mu_0:=\mu$ and $\mu_T:=\nu.$

Based on this notion of interpolation in 2000 J.-D. Benamou and Y. Brenier used some ideas from fluid mechanics to give a dynamical formulation to the Monge-Kantorovich problem (see \cite{BB}). They showed that
$$\frac{1}{pT^{p-1}}W_p^p(\mu,\nu)=\inf\left\{\cB_p(E,\mu)\; : \; \partial_t\mu+\nabla\cdot E=0,\; \mu_0=\mu,\; \mu_T=\nu \right\}.$$ Here $\cB_p$ is a functional defined on pairs $(E,\mu)$, where $E$ is a $d$-dimensional vector measure on $[0,T]\times \O$ and $\mu=(\mu_t)_t$ is a Borel-measurable family of probability measures on $\O$. This functional is defined to be finite only if $E=E_t\otimes \dd t$ (i.e. it is induced by a measurable family of vector measures on $\O$: $\int_{[0,T]\times\O}\xi(t,x)\cdot \dd E(t,x)=\int_0^T\dd t\int_\O \xi(t,x)\cdot \dd E_t(x)$ for all test functions $\xi\in C^0([0,T]\times\O;\R^d)$) and in this case it is defined through
$$
\cB_p(E,\mu):=\left\{
\begin{array}{ll}
\ds\int_0^T\int_\Om \frac{1}{p}\left|v_t\right|^p\dd \mu_t(x)\dd t, &{\rm{if}}\ E_t=v_t\cdot\mu_t\,\\
+\infty, & {\rm{otherwise}}.
\end{array}
\right. 
$$
It is well-known that $\cB_p$ is jointly convex and l.s.c. w.r.t the weak-$*$ convergence of measures (see Section 5.3.1 in \cite{OTAM}) and that, if $\partial_t \mu+\nabla\cdot E=0$, then $\cB_p(E,\mu)<+\infty$ implies that $t\mapsto \mu_t$ is a curve in $AC^{p}([0,T];\mathcal W_p(\Omega))$\footnote{Here $AC^p([0,T];\mathcal W_p(\Om))$ denotes the class of absolutely continuous curves in $\mathcal W_p(\Om)$ with metric derivative in $L^p$. See the connection with the functional $\cB_p$.}. In particular it is a continuous curve and the initial and final conditions on $\mu_0$ and $\mu_T$ are well-defined.

Coming back to curves in Wasserstein spaces, it is well known (see \cite{ags} or Section 5.3 in \cite{OTAM}) that for any distributional solution $\mu_t$ (being a continuous curve in $\mathcal W_p(\Om)$) of the continuity equation $\partial_t\mu+\div E=0$ with $E_t=v_t\cdot\mu_t$, we have the relations 
$$|\mu'|_{W_p}(t)\le\|v_t\|_{L^p_{\mu_t}}\;\;\;{\rm{and}}\;\;\; W_p(\mu_t,\mu_s)\le\int_s^t|\mu'|_{W_p}(\t)\dd\t,$$
where we denoted by $|\mu'|_{W_p}(t)$ the metric derivative w.r.t. $W_p$ of the curve $\mu_t$ (see for instance \cite{AmbTil} for general notions about curves in metric spaces and their metric derivative). For curves $\mu_t$ that are geodesics in $\mathcal W_p(\Om)$ we have the equality
$$W_p(\mu_0,\mu_1)=\int_0^1|\mu'|_{W_p}(t)\dd t=\int_0^1\|v_t\|_{L^p_{\mu_t}}\dd t.$$
The last equality is in fact the Benamou-Brenier formula with the optimal velocity field $v_t$ being the density of the optimal $E_t$ w.r.t. the optimal $\mu_t.$ This optimal velocity field $v_t$ can be computed as $v_t:=(S-\id)\circ (S_t)^{-1}$, where $S_t:=(1-t)\id+tS$ is the transport in McCann's interpolation (we assume here that the initial measure $\mu_0$ is absolutely continuous, so that we can use transport maps instead of plans). This expression can be obtained if we consider that in this interpolation particles move with constant speed $S(x)-x$, but $x$ represents here a Lagrangian coordinate, and not an Eulerian one: if we want to know the velocity at time $t$ at a given point, we have to find out first the original position of the particle passing through that point at that time.

In the sequel we will also need the notion of entropy of a probability density, and for any probability measure $\varrho\in\cP(\Om)$ we define it as
$$\ds\cE(\varrho):=\left\{
\begin{array}{ll}
\ds\int_\Omega\varrho(x)\log\varrho(x)\dd x, & \rm{if}\ \varrho\ll\cL^d,\\
+\infty, & \rm{otherwise}.
\end{array}
\right.$$
We recall that this functional is l.s.c. and geodesically convex in $\mathcal W_2(\Om)$.

As we will be mainly working with absolutely continuous probability measures (w.r.t. Lebesgue), we often identify measures with their densities. 

\subsection{Projection problems in Wasserstein spaces}\label{subsec:proj}
Our analysis strongly relies on the projection operator $P_\cK$ in the sense of $W_2.$ Here $\cK:=\{\rho\in\P(\O):\rho\le1\}$ and 
$$P_\cK[\mu]:=\argmin_{\rho\in\cK}\;\frac12 W_2^2(\mu,\rho).$$
We recall (see \cite{MauRouSan2,xedp} and \cite{gafb}) the main properties of the projection $P_\cK$ operator.
\begin{itemize}
\item As far as $\Omega$ is compact, for any probability measure $\mu$, the minimizer in $\min_{\rho\in\cK}\frac12 W_2^2(\mu,\rho)$ exists and is unique, and the operator $P_\cK$ is continuous (it is even $C^{0,1/2}$ for the $W_2$ distance).
\item The projection $P_\cK[\mu]$ saturates the constraint $\rho\leq 1$ in the sense that for any $\mu\in\cP(\Om)$ there exists a measurable set $B\subseteq\Om$ such that $P_\cK[\mu]=\one_B+\mu^{\rm{ac}}\one_{B^c},$
where $\mu^{\rm{ac}}$ is the absolutely continuous part of $\mu$.
\item The projection is characterized in terms of a pressure field, in the sense that $\rho=P_\cK[\mu]$ if and only if there exists a Lipschitz function $p\geq 0$, with $p(1-\rho)=0$, and such that the optimal transport map $S$ from $\rho$ to $\mu$ is given by $S:=\id-\nabla\vphi=\id+\nabla p$.
\item There is (as proven in \cite{gafb}) a quantified $BV$ estimate: if $\mu\in BV$ (in the sense that it is absolutely continuous and that its density belongs to $BV(\Omega)$), then $P_\cK[\mu]$ is also $BV$ and $$TV(P_\cK[\mu],\Om)\le TV(\mu,\Om).$$
\end{itemize}

This last $BV$ estimate will be crucial in Section \ref{sec:bv}, and it is important to have it in this very form (other estimates of the form $TV(P_\cK[\mu],\Om)\le aTV(\mu,\Om)+b$ would not be as useful as this one, as they cannot be easily iterated). 

%

\section{Existence via a splitting-up type algorithm {\it \textbf{ (Main Scheme)}}}\label{sec:main}
 Similarly to the approach in \cite{MauRouSan1} (see the algorithm (13) and Theorem 3.5) for a general, non-gradient, vector field, we will build a theoretical algorithm, after time-discretization, to produce a solution of  \eqref{fokker2}. Let us remark that splitting-type methods have been widely used in other contexts as well, see for instance the paper \cite{CleMaas} which deals with splitting methods for Fokker-Planck equations and for more general gradient flows in metric and Wasserstein spaces, or \cite{Laborde} where a splitting-like approach is used to attack PDEs which are not gradient flows but ``perturbations'' of gradient flows.
 
In this section the spontaneous velocity field is a general vector field $u:[0,T]\times\O\to\R^d$ (not necessarily a gradient), which depends also on time. The only assumption we require on $u$ is the following:
\begin{equation}\label{hyp:U}
u\in L^\infty([0,T]\times\Om;\R^d).\tag{U}
\end{equation}
We will work on a time interval $[0,T]$ and in a bounded convex domain $\Om\subset\R^d$ (the case of the flat torus is even simpler and we will not discuss it in details). We consider $\rho_0\in\cP^{\rm{ac}}(\Om)$ to be given, which represents the initial density of the population, and we suppose $\rho_0\in\cK$.
 
\subsection{Splitting using the Fokker-Planck equation}

Let us consider the following scheme. 
\smallskip

\begin{minipage}{8.1cm}
{\textbf{Main scheme:}}
Let $\t>0$ be a small time step with $N:=\lfloor T/\t\rfloor.$ Let us set $\rho_0^\t:=\rho_0$ and for every $k\in\{1,\dots,N\}$ we define $\rho_{k+1}^\tau$ from $\rho_k^\tau$ in the following way. First we solve
\begin{equation}\label{FP-basic}
\left\{
\begin{array}{l}
\partial_t\varrho_t -\Delta\varrho_t+\nabla\cdot(\varrho_t u_{t+k\t})=0, \ t\in]0,\t],\\
\varrho_{0}=\rho_k^\tau,
\end{array}
\right.
\end{equation}
equipped with the no-flux boundary condition ($\varrho_t(\nabla\varrho_t-u_{t})\cdot n =0$ a.e. on $\partial\Om$)
and set $\rho_{k+1}^\tau=P_\cK[\tilde{\rho}_{k+1}^\tau],$ where $\tilde{\rho}_{k+1}^\tau=\varrho_\tau.$ See Figure \ref{cxc} on the right.
\end{minipage}
\begin{minipage}{6.5cm}
\begin{tikzpicture}[scale=0.6]
\draw[blue] (0,0) node {$\bullet$} node [left]{$\rho^\tau_k$};
\draw[blue,->,>=stealth] (0.5,0.3) -- (7.5,5.5) node[midway,above,sloped] {$\partial_t\varrho_t-\Delta\varrho_t+\nabla\cdot\left(\varrho_t u_{t+k\t}\right)=0$};
\draw[blue] (8,6) node {$\bullet$} node [above]{$\tilde\rho^\tau_{k+1}=\varrho_{\t}$};
\draw[blue,->,>=stealth] (9.7,0.3) -- (8.15,5.2) node[midway,above,sloped] {$\id+\t\nabla p_{k+1}^\t$};
\draw[blue] (10,0) node {$\bullet$} node [right]{$\rho^\tau_{k+1}$};
\end{tikzpicture}

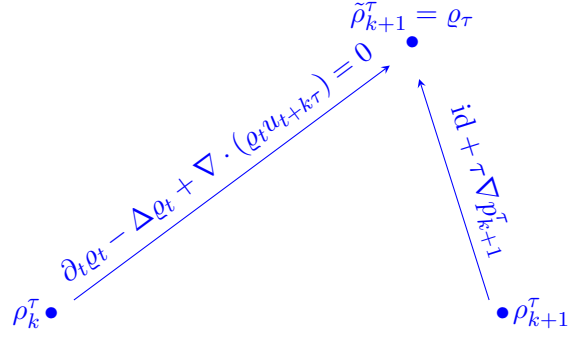
\captionof{figure}{One time step}\label{cxc}
\end{minipage}
\smallskip 

Let us remark first that by classical results on parabolic equations (see for instance \cite{lady}), since $u$ satisfies the assumption \eqref{hyp:U}, Problem \ref{FP-basic} admits a unique distributional solution. 

The above algorithm means the following: first follow the Fokker-Planck equation, ignoring the density constraint, for a time $\tau$, then project. In order to state and prove the convergence of the scheme, we need to define some suitable interpolations of the discrete sequence of densities that we have just introduced.

{\it First interpolation.} We define the following curves of densities, velocities and momenta constructed with the help of the $\rho_k^\tau$'s. First set
$$
\rho^\t_t:=\left\{
\begin{array}{ll}
\varrho_{2(t-k\t)}, & {\rm{if}}\ t\in\left[k\t,(k+1/2)\t\right[,\\
\left(\id+2((k+1)\t-t)\nabla p_{k+1}^\t\right)_\#\rho_{k+1}^\t, &  {\rm{if}}\ t\in\left[(k+1/2)\t,(k+1)\t\right[,
\end{array}
\right.
$$
where $\varrho_t$ is the solution of the Fokker-Planck equation \eqref{FP-basic} with initial datum $\rho_k^\t$ and $\nabla p_{k+1}^\t$ arises from the projection of $\tilde\rho_{k+1}^\t,$ more precisely $(\id+\t\nabla p_{k+1}^\t)$ is the optimal transport from $\rho_{k+1}^\t$ to $\tilde\rho_{k+1}^\t.$ What are we doing? We are fitting into a time interval of length $\tau$ the two steps of our algorithm. First we follow the FP equation \eqref{FP-basic} at double speed, then we interpolate between the measure we reached and its projection following the geodesic between them. This geodesic is easily described as an image measure of $\rho_{k+1}^\t$ through McCann's interpolation. By the construction it is clear that $\rho^\t_t$ is a continuous curve in $\cP(\Om)$ for $t\in[0,T].$ We now define a family of time-dependent vector fields though
$$
v^\t_t:=\left\{
\begin{array}{ll}
-2\frac{\nabla{\varrho_{2(t-k\t)}}}{\varrho_{2(t-k\t)}}+2u_{t}, & {\rm{if}}\ t\in\left[k\t,(k+1/2)\t\right[,\\
-2\nabla p_{k+1}^\t\circ(\id+2((k+1)\t-t)\nabla p_{k+1}^\t)^{-1}, &  {\rm{if}}\ t\in\left[(k+1/2)\t,(k+1)\t\right[,
\end{array}
\right.
$$
and, finally, we simply define the curve of momenta as $E_t^\t:=\rho_t^\t v_t^\t.$ 


{\it Second interpolation.} We define another interpolation as follows. Set
$$\tilde\rho_t^\t:=\varrho_{t-k\t},\;\;\; {\rm{if}}\ t\in[k\t,(k+1)\t[,$$
where $\varrho_t$ is (again) the solution of the Fokker-Planck equation \eqref{FP-basic} on the time interval $[0,\t]$ with initial datum $\rho_k^\t.$ Here we do not double its speed. We define the curve of velocities 
$$\tilde v_t^\t:=-\frac{\nabla{\varrho_{t-k\t}}}{\varrho_{t-k\t}}+u_{t},\;\;\; {\rm{if}}\ t\in\left[k\t,(k+1)\t\right[,$$
and we build the curve of momenta by $\tilde E_t^\t:=\tilde\rho_t^\t \tilde v_t^\t.$ 

{\it Third interpolation.} For each $\t$, we also define piecewise constant curves, 
\begin{eqnarray*}\hat\rho_t^\t:=\rho_{k+1}^\t,\;\;\; &&{\rm{if}}\ t\in[k\t,(k+1)\t[,\\
\hat v_t^\t:=\nabla p_{k+1}^\t,\;\;\; &&{\rm{if}}\ t\in[k\t,(k+1)\t[,\end{eqnarray*}
and $\hat E_t^\t:=\hat\rho_t^\t\hat v_t^\t.$ We remark that $p_{k+1}^\t(1-\rho_{k+1}^\t)=0,$ hence the curve of momenta is just
$$\hat E_t^\t:=\nabla p_{k+1}^\t,\;\;\; {\rm{if}}\ t\in[k\t,(k+1)\t[.$$

Mind the differences in the construction of $\rho_t^\t,$ $\tilde\rho_t^\tau$ and $\hat\rho_t^\tau$ (hence in the construction of $v_t^\t$, $\tilde v_t^\t$ and $\hat v_t^\t$ and $E_t^\t$, $\tilde E_t^\t$ and $\hat E_t^\t$): 

1) the first one is continuous in time for the weak-* convergence, while the second and third ones are not;

2) in the first construction we have taken into account the projection operator explicitly, while in the second one we see it just in an indirect manner (via the `jumps' occurring at every time of the form $t=k\t$). The third interpolation is piece-wise constant, and at every time it satisfies the density constraint;

3) in the first interpolation the pair $(\rho^\tau,E^\tau)$ solves the continuity equation, while in the other two they do not. This is not astonishing, as the continuity equation characterizes continuous curves in $\mathcal W_2(\O)$.

In order to prove the convergence of the scheme above, we will obtain uniform $AC^2([0,T];\mathcal W_2(\Om))$ bounds for the curves $\rho^\t.$ A key observation here is that the metric derivative (w.r.t. $W_2$) of the solution of the Fokker-Planck equation is comparable with the time differential of the entropy functional along the same solution (see Lemma \ref{entropy-FP}). Now we state the main theorem of this section.

\begin{theorem}\label{convergence}
Let $\rho_0\in\cK$ and $u$ be a given desired velocity field satisfying \eqref{hyp:U}. Let us consider the interpolations introduced above. Then there exists a continuous curve $[0,T]\ni t\mapsto \rho_t\in\mathcal W_2(\Om)$ and some vector measures  $E,\tilde E, \hat E\in\fM([0,T]\times\Om)$ such that the curves $\rho^\t,\tilde\rho^\t,\hat\rho^\t$ converge uniformly in $\mathcal W_2(\Om)$  to $\rho$ and 
$$
E^\t\weaks E,\quad \tilde E^\t\weaks \tilde E,\quad \hat E^\t\weaks \hat E,\;\; {\rm{in}}\;\; \fM([0,T]\times\Om)^d,\;\; {\rm{as}}\ \t\to0.
$$
Moreover $E=\tilde E-\hat E$ and for a.e. $t\in[0,T]$ there exist time-dependent measurable vector fields $v_t,\tilde v_t,\hat v_t$ such that 
\begin{itemize}
\item[(1)] $E=\rho v,\tilde E=\rho\tilde v,\;\; \hat E=\rho\hat v,$ 
\item[(2)] $\ds\int_0^T\left(\|v_t\|_{L^2_{\rho_t}}^2+\|\tilde v_t\|_{L^2_{\rho_t}}^2+ \|\hat v_t\|_{L^2_{\rho_t}}^2\right)\dd t<+\infty,$
\item[(3)] $v_t=\tilde v_t-\hat v_t\ \rho_t-{\rm{a.e.}},\;\; \tilde E_t=\rho_t u_t-\nabla\rho_t\;\; {\rm{and}}\;\; \hat v_t=\nabla p_t,\ \rho_t-{\rm{a.e.}},$
\end{itemize}
where $p\in L^2([0,T];H^1(\Om)),$ $p\geq 0$ and $p(1-\rho)=0$ a.e. in $[0,T]\times\Om$. As a consequence, the pair $(\rho,p)$ is a weak solution of the problem
\begin{equation}\label{FP-advanced}
\left\{
\begin{array}{ll}
\partial_t\rho_t-\Delta\rho_t+\nabla\cdot\left(\rho_t(u_t-\nabla p_t)\right)=0, & {\rm{in}}\ [0,T]\times\Om,\\
p_t\ge 0,\ \rho_t\leq 1,\ p_t(1-\rho_t)=0, & {\rm{in}}\ [0,T]\times\Om,\\
\rho_t(\nabla\rho_t-u_t+\nabla p_t)\cdot n = 0, & {\rm{on}}\ [0,T]\times\partial\Om,\\
\rho(0,\cdot)=\rho_0.
\end{array}
\right.
\end{equation}
\end{theorem}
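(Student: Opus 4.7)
The plan is to discretize via the stated scheme, establish uniform entropy-based \emph{a priori} bounds, upgrade them to $\mathcal W_2$-compactness, and pass to the limit in the continuity equation. The four keys are: (i) entropy dissipation along the Fokker-Planck step, (ii) entropy-monotonicity of the projection, (iii) an $L^2_tH^1_x$ bound on the discrete pressure, and (iv) passing to the limit in the saturation condition $p(1-\rho)=0$.

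\emph{A priori bounds.} Testing \eqref{FP-basic} against $\log\varrho_t$ and absorbing the drift via Cauchy-Schwarz yields
\[
\cE(\tilde\rho_{k+1}^\t)-\cE(\rho_k^\t)+\tfrac12\int_0^\t\!\!\int_\O\frac{|\nabla\varrho_s|^2}{\varrho_s}\dd x\dd s\le \tfrac12\|u\|_\infty^2\,\t,
\]
while along the projection $\cE(P_\cK[\mu])\le\cE(\mu)$, since the representation $P_\cK[\mu]=\one_B+\mu^{\rm ac}\one_{B^c}$ caps the density at $1$ and lowers $r\log r$ on $\{\mu>1\}$. Chaining and telescoping delivers the uniform Fisher bound $\int_0^T\!\!\int_\O|\nabla\tilde\rho^\t|^2/\tilde\rho^\t\,\dd x\dd t\le C$. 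I would then establish the pressure-type estimate $\sum_k W_2^2(\tilde\rho_{k+1}^\t,\rho_{k+1}^\t)/\t\le C$: since $\id+\t\nabla p_{k+1}^\t$ is the optimal transport from $\rho_{k+1}^\t$ to $\tilde\rho_{k+1}^\t$, this is equivalent to $\int_0^T\!\!\int_\O|\nabla p^\t|^2\dd x\dd t\le C$, and is obtained by combining the $W_2$/entropy comparison (Lemma \ref{entropy-FP}) with the entropy-decreasing nature of $P_\cK$.

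\emph{Compactness and identification.} These bounds jointly control the Benamou-Brenier energy $\cB_2(E^\t,\rho^\t)$: on Fokker-Planck halves $|\rho^{\t\,\prime}|_{W_2}^2\le 2(\int|\nabla\varrho|^2/\varrho+\|u\|_\infty^2)$, and on projection halves the contribution is $2W_2^2/\t$. Thus $\rho^\t$ is bounded in $AC^2([0,T];\mathcal W_2(\Om))$, and Ascoli-Arz\`ela in the compact space $(\cP(\Om),W_2)$ extracts $\rho^\t\to\rho$ uniformly along a subsequence; the single-step estimates $W_2(\rho^\t_t,\tilde\rho^\t_t)+W_2(\rho^\t_t,\hat\rho^\t_t)=O(\sqrt\t)$ force $\tilde\rho^\t$ and $\hat\rho^\t$ to share the same limit. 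Boundedness of $\cB_2$ gives $E^\t,\tilde E^\t,\hat E^\t$ bounded in $\fM([0,T]\times\O)^d$; weak-$*$ compactness and joint lower semicontinuity of $\cB_2$ produce limits $E=\rho v$, $\tilde E=\rho\tilde v$, $\hat E=\rho\hat v$ with $v,\tilde v,\hat v\in L^2(\dd t\otimes\dd\rho_t)$, and the identity $E^\t=\tilde E^\t-\hat E^\t$ passes to the limit. Since $\tilde\rho^\t$ solves \eqref{FP-basic} on each subinterval with $u_{t+k\t}\to u_t$, the Fisher bound yields $\nabla\tilde\rho^\t\weak\nabla\rho$ in $L^2$ and passing to the limit identifies $\tilde E=\rho u-\nabla\rho$.

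\emph{Pressure, saturation, and main obstacle.} From $\hat E^\t=\nabla p^\t$ and the $L^2L^2$ bound on $\nabla p^\t$, the standing assumption $|\O|>1$ (together with $p^\t=0$ on $\{\hat\rho^\t<1\}$, a set of measure $\ge|\O|-1>0$) allows a uniform Poincar\'e inequality, yielding $\|p^\t\|_{L^2([0,T];H^1(\Om))}\le C$; extracting $p^\t\weak p$ gives $p\ge 0$ and $\hat v=\nabla p$. To pass $\int_0^T\!\!\int p^\t(1-\hat\rho^\t)=0$ to the limit, I would upgrade one of the two convergences to a strong one (via Aubin-Lions compactness on $\hat\rho^\t$ coming from the FP equation and the $W_2$-bounds) and then use $p(1-\rho)\ge 0$ pointwise to deduce $p(1-\rho)=0$ a.e. The hardest step throughout is the quantitative bound $\sum_k W_2^2(\tilde\rho_{k+1}^\t,\rho_{k+1}^\t)/\t\le C$: both the $AC^2$ estimate on $\rho^\t$ and the $L^2_tH^1_x$ bound on the limiting pressure hinge on it, and its proof requires a delicate combination of Fisher dissipation from \eqref{FP-basic} with the entropy-decreasing property of $P_\cK$ via Lemma \ref{entropy-FP}; this interplay is the technical heart of the scheme.
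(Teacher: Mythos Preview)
Your overall strategy matches the paper's proof closely, and most ingredients (entropy dissipation, entropy decrease under $P_\cK$, Poincar\'e on the pressure, Aubin--Lions for the saturation condition) are correct. However, there is one genuine gap: you assert that ``the identity $E^\t=\tilde E^\t-\hat E^\t$ passes to the limit'', but no such identity holds at the discrete level. The three momentum interpolations live on different time parametrizations: $E^\t$ is built from the \emph{first} interpolation (double-speed Fokker--Planck on $[k\t,(k+\tfrac12)\t[$, then the projection geodesic on $[(k+\tfrac12)\t,(k+1)\t[$), whereas $\tilde E^\t$ runs the Fokker--Planck step at normal speed on the whole $[k\t,(k+1)\t[$ and $\hat E^\t$ is piecewise constant equal to $\nabla p_{k+1}^\t$. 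On a Fokker--Planck half-interval $E^\t$ carries no projection contribution, and on a projection half-interval $E^\t$ carries no Fokker--Planck contribution; there is no pointwise relation $E^\t=\tilde E^\t-\hat E^\t$. The paper proves $E=\tilde E-\hat E$ \emph{only in the limit}: testing against a Lipschitz $f$, one reparametrizes each half-interval back to $[k\t,(k+1)\t]$ and shows that $\big|\int f\cdot(E^\t-\tilde E^\t+\hat E^\t)\big|\le C\t\,\Lip(f)$, using that the reparametrized test functions differ from $f_t$ by $O(\t)$ together with the uniform bounds on $|\tilde E^\t|([0,T]\times\Om)$, $|\hat E^\t|([0,T]\times\Om)$ and $\cB_2(\hat E^\t,\hat\rho^\t)$. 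This step is not hard once seen, but it is not automatic and your sketch skips it entirely.

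A second, smaller issue concerns the identification of $\tilde E$. You write that ``the Fisher bound yields $\nabla\tilde\rho^\t\weak\nabla\rho$ in $L^2$''. Under \eqref{hyp:U} alone there is no uniform $L^\infty$ bound on $\tilde\rho^\t$, so the Fisher information only controls $\nabla\sqrt{\tilde\rho^\t}$ in $L^2$, hence $\nabla\tilde\rho^\t$ in $L^1$ but not in $L^2$. More importantly, you do not address the passage to the limit in $u_t\tilde\rho^\t$: since $u$ is merely $L^\infty$, weak-$*$ convergence of $\tilde\rho^\t$ as measures does not suffice to multiply by $u_t$. The paper handles this by observing (Corollary~\ref{entropy_small}) that $\cE(\tilde\rho^\t_t)\le C\t$ uniformly in $t$, which gives uniform integrability and upgrades the convergence of $\tilde\rho^\t_t$ to weak $L^1$, so that the product with the $L^\infty$ factor $u_t$ passes to the limit.
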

To prove this theorem we will use the following tools.

\begin{lemma}\label{entropy-FP}
Let us consider a solution $\varrho_t$ of the Fokker-Planck equation on $[0,T]\times\Om$ with the velocity field $u$ satisfying \eqref{hyp:U} and with no-flux boundary conditions on $[0,T]\times\partial\Om$. Then for any time interval $]a,b[$ we have the following estimate 
\begin{equation}\label{estim-velocity}
\frac{1}{2}\int_a^b\int_{\Om}\left|-\frac{\nabla\varrho_t}{\varrho_t}+u_{t}\right|^2\varrho_t\dd x\dd t\le\cE(\varrho_a)-\cE(\varrho_b)+\frac{1}{2}\int_a^b\int_\Om|u_{t}|^2\varrho_t\dd x\dd t
\end{equation}
In particular this implies
\begin{equation}\label{tool2}
\frac{1}{2}\int_a^b|\varrho'_t|^2_{W_2}\dd t\le\cE(\varrho_a)-\cE(\varrho_b)+\frac{1}{2}\int_a^b\int_\Om|u_{t}|^2\varrho_t\dd x\dd t,
\end{equation}
where $|\varrho_t'|_{W_2}$ denotes the metric derivative of the curve $t\mapsto\varrho_t\in\mathcal W_2(\Om)$.
\end{lemma}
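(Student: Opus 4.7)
The plan is to compute the time derivative of the entropy $\cE(\varrho_t)$ along a solution of the Fokker-Planck equation, complete the square so that the kinetic quantity $|-\nabla\varrho_t/\varrho_t + u_t|^2\varrho_t$ appears, and then drop the resulting non-negative Fisher-information term.

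First, I would rewrite $\partial_t\varrho_t - \Delta\varrho_t + \nabla\cdot(\varrho_t u_t)=0$ in continuity-equation form as $\partial_t\varrho_t + \nabla\cdot(\varrho_t v_t)=0$ with $v_t := -\nabla\varrho_t/\varrho_t + u_t$. Assuming provisionally enough regularity (smoothness and strict positivity of $\varrho_t$, which one gets by approximating $u$ and $\rho_0$ by smooth data and using parabolic regularity before passing to the limit), I differentiate the entropy:
$$\frac{d}{dt}\cE(\varrho_t) = \int_\Om(\log\varrho_t+1)\partial_t\varrho_t\dd x = \int_\Om(\log\varrho_t+1)\bigl[\Delta\varrho_t-\nabla\cdot(\varrho_t u_t)\bigr]\dd x.$$
Integration by parts, using the no-flux boundary condition and the fact that constants contribute zero (mass is preserved), gives
$$\frac{d}{dt}\cE(\varrho_t) = -\int_\Om\frac{|\nabla\varrho_t|^2}{\varrho_t}\dd x + \int_\Om\nabla\varrho_t\cdot u_t\dd x.$$

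Next, I expand the square on the left-hand side of \eqref{estim-velocity}:
$$\frac{1}{2}\int_\Om\left|{-}\frac{\nabla\varrho_t}{\varrho_t}+u_t\right|^2\varrho_t\dd x = \frac{1}{2}\int_\Om\frac{|\nabla\varrho_t|^2}{\varrho_t}\dd x - \int_\Om\nabla\varrho_t\cdot u_t\dd x + \frac{1}{2}\int_\Om|u_t|^2\varrho_t\dd x.$$
Substituting the formula for $\int\nabla\varrho_t\cdot u_t\dd x$ obtained above yields
$$\frac{1}{2}\int_\Om|v_t|^2\varrho_t\dd x = -\frac{1}{2}\int_\Om\frac{|\nabla\varrho_t|^2}{\varrho_t}\dd x - \frac{d}{dt}\cE(\varrho_t) + \frac{1}{2}\int_\Om|u_t|^2\varrho_t\dd x.$$
Integrating in time on $[a,b]$ and discarding the non-positive Fisher-information term produces exactly \eqref{estim-velocity}. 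Inequality \eqref{tool2} is then immediate: since $(\varrho_t,\varrho_t v_t)$ solves the continuity equation with $v_t\in L^2(\varrho_t)$, the general fact recalled in Section~\ref{sec:2} gives $|\varrho'_t|_{W_2}\le \|v_t\|_{L^2_{\varrho_t}}$ for a.e.\ $t$, so squaring and integrating transfers the bound from the kinetic energy to the metric derivative.

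The main obstacle is the rigorous justification of the entropy chain rule when $\varrho_t$ may vanish (so that $\log\varrho_t$ is singular) and $u$ is merely $L^\infty$. I would bypass this through a standard regularization: replace the entropy density $s\mapsto s\log s$ by $s\mapsto s\log(s+\e)$ (and/or mollify $u$ and $\rho_0$), carry out the above computation for the regularized problem where all integrations by parts are legitimate, and then let $\e\to 0$. The right-hand side is stable by dominated convergence combined with the lower semi-continuity of $\cE$, while the left-hand side is controlled by lower semi-continuity of the Benamou-Brenier type functional $\cB_2$ on the pair $(\varrho_t v_t,\varrho_t)$ with respect to weak-$*$ convergence of measures (Section 5.3.1 of \cite{OTAM}); this yields \eqref{estim-velocity} in the general case.
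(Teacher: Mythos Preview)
Your proof is correct and follows essentially the same approach as the paper: differentiate the entropy along the flow, integrate by parts using the no-flux boundary condition, compare with the expansion of $\frac12\int|v_t|^2\varrho_t$ to isolate (and then drop) the Fisher information, and finally invoke $|\varrho'_t|_{W_2}\le\|v_t\|_{L^2_{\varrho_t}}$ for \eqref{tool2}. The only difference is cosmetic in the regularization step: the paper approximates the initial datum by smooth strictly positive densities and $u$ by smooth fields (with $u^k\to u$ strongly in $L^4$ so that $\int|u^k|^2\varrho^k\to\int|u|^2\varrho$ by a weak/strong product argument), whereas you mention the alternative of regularizing the entropy integrand; both are standard and lead to the same conclusion via lower semicontinuity of $\cB_2$ on the left and of $\cE$ at $t=b$ on the right.
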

\begin{proof}
To prove this inequality, we will first make computations in the case where both $u$ and $\varrho$ are smooth, and $\varrho$ is boundeded from below by a positive constant. In this case we can write
\begin{align*}
\frac{\dd}{\dd t}\cE(\varrho_t)&=\int_\Om(\log\varrho_t+1)\partial_t\varrho_t\dd x=\int_\Om \log\varrho_t(\Delta\varrho_t-\nabla\cdot(\varrho_t u_{t}))\dd x\\
&=\int_\Om\left(-\frac{|\nabla\varrho_t|^2}{\varrho_t}+u_{t}\cdot\nabla\varrho_t\right)\dd x,
\end{align*}
where we used the conservation of mass (i.e. $\int_\Om \partial_t\varrho_t\dd x=0$) and the boundary conditions in the integration by parts.
We now compare this with
\begin{eqnarray*}
\frac 12\int_{\Om}\left|-\frac{\nabla\varrho_t}{\varrho_t}+u_{t}\right|^2\varrho_t\dd x-\frac12\int_\Om|u_{t}|^2\varrho_t\dd x&=&\int_\Om\left(\frac12\frac{|\nabla\varrho_t|^2}{\varrho_t}-\nabla\varrho_t\cdot u_{t}\right)\dd x\\&\le&\int_\Om\left(\frac{|\nabla\varrho_t|^2}{\varrho_t}-\nabla\varrho_t\cdot u_{t}\right)\dd x =-\frac{\dd}{\dd t}\cE(\varrho_t).
\end{eqnarray*}
This provides the first part of the statement, i.e. \eqref{estim-velocity}. If we combine this with the fact that the metric derivative of the curve $t\mapsto\varrho_t$ is always less or equal than the $L^2_{\varrho_t}$ norm of the velocity field in the continuity equation, we also get
$$\frac12|\varrho'_t|^2_{W_2}-\frac12\int_\Om|u_{t}|^2\varrho_t\le-\frac{\dd}{\dd t}\cE(\varrho_t),$$
and hence \eqref{tool2}.

In order to prove the same estimates without artificial smoothness and lower bound assumptions, we can act by approximation. We approximate the density $\varrho_a$ by smooth and strictly positive densities $\varrho^k_a$ (by convolution, so that we guarantee in particular $\cE(\varrho_a^k)\to\cE(\varrho_a)$), and the vector field $u$ with smooth vector fields $u^k$ (strongly in $L^4([a,b]\times\Omega)$, keeping the $L^\infty$ bound). If we call $\varrho^k$ the corresponding solution of the Fokker Planck equation, it satisfies \eqref{estim-velocity}. This implies a uniform bound (w.r.t. $k$) for $\sqrt{\varrho^k}$ in $L^2([a,b];H^1(\Omega))$, and hence a uniform bound on $\varrho^k$ in $L^2([a,b]\times\Omega)$. From these bounds and the uniqueness of the solution of the Fokker-Planck equation with $L^\infty$ drift we deduce $\varrho^k\to\varrho$. The semicontinuity of the left-hand side in \eqref{estim-velocity} and of the entropy term at $t=b$, together with the convergence of the entropy at $t=a$ and the convergence $\int_a^b\int_\O |u^k|^2\varrho^k\dd x\dd t\to\int_a^b\int_\O |u|^2\varrho\dd x\dd t$ (because we have a product of weak and strong convergence in $L^2$) allow to pass \eqref{estim-velocity} to the limit.
\end{proof}

\begin{corollary}\label{entropy_small}
From the inequality \eqref{tool2} we deduce that 
$$\cE(\varrho_b)-\cE(\varrho_a)\le\frac{1}{2}\int_a^b\int_\Om|u_{t}|^2\varrho_t\dd x\dd t,$$
hence in particular for $u$ satisfying \eqref{hyp:U} we have
$$\cE(\varrho_b)-\cE(\varrho_a)\le\frac12 \|u\|^2_{L^\infty}(b-a).$$
As a consequence, if $\ro_a\le 1,$ then we have
$$\cE(\varrho_b)\le\frac12 \|u\|^2_{L^\infty}(b-a).$$
The same estimate can be applied to the curve $\tilde\rho^\t$, with $a=k\t$ and $b\in ]k\t,(k+1)\t[$, thus obtaining $\cE(\tilde\rho^\t_t)\leq C\t$ for every $t$.
\end{corollary}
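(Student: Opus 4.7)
The plan is to obtain each of the three displayed inequalities as a direct consequence of \eqref{tool2}, dropping nonnegative terms and using elementary bounds. First, I would simply discard the term $\frac{1}{2}\int_a^b|\varrho'_t|^2_{W_2}\dd t\ge 0$ from the left-hand side of \eqref{tool2}. Rearranging what remains yields immediately
\begin{equation*}
\cE(\varrho_b)-\cE(\varrho_a)\le\frac{1}{2}\int_a^b\int_\Om|u_{t}|^2\varrho_t\dd x\dd t,
\end{equation*}
which is the first stated inequality.

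Next, since $u$ satisfies \eqref{hyp:U}, I can bound $|u_t|^2\le\|u\|_{L^\infty}^2$ pointwise, pull it out of the integral, and use the conservation of mass $\int_\Om\varrho_t\dd x=1$ (valid for the Fokker--Planck equation with no-flux boundary conditions) to get $\frac{1}{2}\int_a^b\int_\Om|u_t|^2\varrho_t\dd x\dd t\le\frac{1}{2}\|u\|_{L^\infty}^2(b-a)$, which gives the second inequality.

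For the third inequality, the key observation is that $x\log x\le 0$ for $x\in[0,1]$, so $\varrho_a\le 1$ forces $\cE(\varrho_a)=\int_\Om\varrho_a\log\varrho_a\dd x\le 0$. Substituting this into the previous bound yields $\cE(\varrho_b)\le\frac{1}{2}\|u\|_{L^\infty}^2(b-a)$.

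Finally, to obtain $\cE(\tilde\rho^\t_t)\le C\t$, I would apply the third inequality to the curve $\tilde\rho^\t$ on the interval $[k\t,t]$ with $t\in]k\t,(k+1)\t[$: by construction $\tilde\rho^\t$ agrees on this interval with the Fokker--Planck solution starting from $\rho_k^\t$, and $\rho_k^\t\in\cK$ guarantees $\rho_k^\t\le 1$, so the hypothesis is satisfied. Since $t-k\t<\t$, we get $\cE(\tilde\rho^\t_t)\le\frac{1}{2}\|u\|_{L^\infty}^2\t$, with $C=\frac{1}{2}\|u\|_{L^\infty}^2$. There is no real obstacle here: the entire statement is a bookkeeping exercise combining \eqref{tool2}, mass conservation, and the sign of $x\log x$ on $[0,1]$.
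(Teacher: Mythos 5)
Your proposal is correct and matches what the paper implicitly intends; the corollary is stated without proof precisely because the deduction is as routine as you describe. The only point worth spelling out is the one you already made in passing: the FP solution with no-flux boundary conditions preserves total mass, so $\int_\Om\varrho_t\dd x=1$ for all $t$, which is what lets you replace $\int_\Om|u_t|^2\varrho_t\dd x$ by $\|u\|_{L^\infty}^2$; and for the last claim, on $[k\t,(k+1)\t[$ the curve $\tilde\rho^\t$ is, by definition, the FP solution (with drift $u_{\cdot+k\t}$, hence the same $L^\infty$ bound) started from $\rho_k^\t\in\cK$, so the third inequality applies with $b-a\le\t$.
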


\begin{lemma}\label{proj_entropy}
For any $\rho\in\cP(\Om)$ we have $\cE\left(P_\cK[\rho]\right)\le\cE(\rho).$ 
\end{lemma}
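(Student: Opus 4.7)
The plan is to reduce the statement to a direct application of Jensen's inequality, using the structural characterization of $P_\cK$ recalled in Subsection \ref{subsec:proj}.

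First, I would dispose of the trivial case: if $\rho$ is not absolutely continuous w.r.t.\ $\cL^d$, then $\cE(\rho) = +\infty$ and there is nothing to prove. So I assume $\rho \ll \cL^d$ and identify $\rho$ with its density, so that $\rho^{\rm ac} = \rho$.

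Next, I invoke the saturation property: there exists a measurable set $B \subseteq \Om$ such that
\[
P_\cK[\rho] = \one_B + \rho\,\one_{B^c}.
\]
This decomposition immediately splits the entropy into
\[
\cE(P_\cK[\rho]) = \int_B 1\cdot\log 1\dd x + \int_{B^c}\rho\log\rho\dd x = \int_{B^c}\rho\log\rho\dd x,
\]
so that
\[
\cE(\rho) - \cE(P_\cK[\rho]) = \int_B \rho\log\rho\dd x.
\]
The whole proof then reduces to showing that this last quantity is non-negative.

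The key observation is that mass conservation forces the average of $\rho$ on $B$ to equal $1$: indeed, from $\int_\Om \rho\dd x = 1 = \int_\Om P_\cK[\rho]\dd x$ one gets
\[
\int_B \rho\dd x = |B|.
\]
Applying Jensen's inequality to the convex function $\varphi(t) = t\log t$ with respect to the normalized Lebesgue measure $\frac{1}{|B|}\dd x$ on $B$ then yields
\[
\frac{1}{|B|}\int_B \rho\log\rho\dd x \ge \varphi\!\left(\frac{1}{|B|}\int_B \rho\dd x\right) = \varphi(1) = 0,
\]
which gives $\int_B \rho\log\rho\dd x \ge 0$ (with the convention that if $|B|=0$ the term simply vanishes). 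Combining with the identity above finishes the proof. There is no real obstacle here; the only nontrivial ingredient is the saturation structure of the projection, which has already been recorded among the properties of $P_\cK$.
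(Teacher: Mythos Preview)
Your proof is correct and follows essentially the same approach as the paper: reduce to the absolutely continuous case, use the saturation structure $P_\cK[\rho]=\one_B+\rho\one_{B^c}$, observe that mass conservation forces $\int_B\rho\dd x=|B|$, and conclude via Jensen's inequality for $t\mapsto t\log t$. The only cosmetic addition is your explicit handling of the degenerate case $|B|=0$.
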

\begin{proof}
We can assume $\rho\ll\cL^d$, otherwise the claim is straightforward. As we pointed out in Section \ref{subsec:proj},  we know that there exists a measurable set $B\subseteq\Om$ such that 
$$P_\cK[\rho]=\one_B+\rho\one_{B^c}.$$ Hence it is enough to prove that 
$$\int_B\rho\log\rho\dd x\ge 0=\int_B P_\cK[\rho]\log P_\cK[\rho]\dd x,$$
as the entropies on $B^c$ coincide. As the mass of $\rho$ and $P_\cK[\rho]$ is the same on the whole $\Omega$, and they coincide on $B^c$, we have $\ds\int_B\rho(x)\dd x=\int_B P_\cK[\rho]\dd x=|B|$.

Then, by Jensen's inequality we have  
$$\frac{1}{|B|}\int_B\rho\log\rho\dd x\ge \left(\frac{1}{|B|}\int_B\rho\dd x\right)\log\left(\frac{1}{|B|}\int_B\rho\dd x\right)=0.$$
The entropy decay follows. 
\end{proof}
To analyze the pressure field we will need the following result.

\begin{lemma}\label{pressure}
Let $\{p^\t\}_{\t>0}$ be a bounded sequence in $L^2([0,T]; H^1(\Om))$ and $\{\rho^\t\}_{\t>0}$ a sequence of piecewise constant curves valued in $\mathcal W_2(\Om)$, which satisfy $W_2(\rho^\t(a),\rho^\t(b))\leq C\sqrt{b-a+\t}$ for all $a<b\in [0,T]$ for a fixed constant $C$. Suppose that 
$$p^\tau\geq 0,\; p^\tau(1- \rho^\tau)=0,\;\rho^\tau\leq 1,$$
and that
$$p^\tau\weak p\; {\rm{weakly\ in}}\; L^2([0,T]; H^1(\Om))\;\;{\rm{and}}\;\; \rho^\tau\to\rho\; {\rm{uniformly\ in\ }}\mathcal W_2(\Om).$$
Then $p(1-\rho)=0$ a.e. in $[0,T]\times\Om$.
\end{lemma}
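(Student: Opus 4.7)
The plan is to pass to the limit in the identity $\iint p^\tau(1-\rho^\tau)\dd x\dd t=0$, which holds for every $\tau>0$ because of the complementarity condition. Splitting $p^\tau(1-\rho^\tau)=p^\tau-p^\tau\rho^\tau$, the first piece satisfies $\iint p^\tau\dd x\dd t\to\iint p\dd x\dd t$ by weak $L^2$-convergence tested against the constant function $1$. For the second piece $\iint p^\tau\rho^\tau$ we face a product of two weakly convergent sequences, and must upgrade one of them to strong convergence in a compatible space. Once $\iint p^\tau\rho^\tau\to\iint p\rho$ is established, we obtain $\iint p(1-\rho)\dd x\dd t=0$; since weak limits preserve the bound $p\ge 0$, and since $\rho^\tau\le 1$ combined with the uniform $\mathcal W_2$-convergence forces $\rho\le 1$ a.e. (via weak-$\ast$ convergence in $L^\infty$, identified with the $\mathcal W_2$-limit), the integrand is nonnegative, and we conclude $p(1-\rho)=0$ a.e.

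The key idea is to upgrade the $\mathcal W_2$-H\"older bound on $\rho^\tau$ to an $H^{-1}$-H\"older bound, using the classical optimal-transport inequality $\|\mu-\nu\|_{H^{-1}(\Omega)}\le \sqrt{M}\,W_2(\mu,\nu)$, valid whenever both densities are bounded by $M$ (a direct consequence of the Benamou-Brenier formula applied to the displacement interpolation; in our saturated setting $M=1$). Applied to $\rho^\tau$, this yields $\|\rho^\tau(t)-\rho^\tau(s)\|_{H^{-1}}\le C\sqrt{|t-s|+\tau}$. Combined with pointwise relative compactness of $\{\rho^\tau(t)\}$ in $H^{-1}(\Omega)$ --- inherited from the bound $\rho^\tau\le 1$ via the compact embedding $L^2(\Omega)\hookrightarrow H^{-1}(\Omega)$ on the bounded domain $\Omega$ --- an Arzel\`a-Ascoli-type argument (tolerating the $\sqrt\tau$ slack, which vanishes as $\tau\to 0$) yields relative compactness of $\{\rho^\tau\}$ in $C([0,T];H^{-1}(\Omega))$. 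The strong limit can only be $\rho$, by uniqueness of the distributional limit, which is already pinned down by the $\mathcal W_2$-convergence together with the $L^\infty$-bound.

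With this compactness in hand, the bilinear term converges via the duality pairing $H^1\times H^{-1}$:
\[
\iint p^\tau\rho^\tau\dd x\dd t \;=\; \int_0^T\langle p^\tau(t),\rho^\tau(t)\rangle_{H^1,H^{-1}}\dd t \;\longrightarrow\; \int_0^T\langle p(t),\rho(t)\rangle\dd t \;=\; \iint p\rho\dd x\dd t,
\]
combining the weak convergence $p^\tau\rightharpoonup p$ in $L^2([0,T];H^1(\Omega))$ with the strong convergence $\rho^\tau\to\rho$ in $L^2([0,T];H^{-1}(\Omega))$ (the latter following from convergence in $C([0,T];H^{-1})$ on a bounded time interval). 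The main obstacle throughout is precisely this bilinear passage to the limit, and the decisive ingredient is the Loeper-type inequality that trades the $L^\infty$-constraint $\rho^\tau\le 1$ for an upgrade from $\mathcal W_2$-regularity to $H^{-1}$-regularity; without the density constraint this argument would collapse, because weak $L^2$-convergence alone does not pass to products.
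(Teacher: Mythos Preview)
Your proposal is correct and follows essentially the same approach that the paper indicates (the paper omits the proof, referring to \cite{MauRouSan2,aude_phd,DMS}, but sketches precisely this strategy in the paragraph following the lemma): upgrade $\rho^\tau$ to strong convergence in $L^2([0,T];H^{-1}(\Omega))$ via the Loeper-type inequality $\|\mu-\nu\|_{H^{-1}}\le W_2(\mu,\nu)$ for densities bounded by $1$, then pass to the limit in $\iint p^\tau\rho^\tau$ by the $H^1$--$H^{-1}$ duality pairing. One minor remark: since the lemma already \emph{assumes} uniform convergence $\rho^\tau\to\rho$ in $\mathcal W_2$, and since the limit $\rho$ inherits $\rho\le 1$, you can apply the Loeper inequality directly between $\rho^\tau(t)$ and $\rho(t)$ to get $\sup_t\|\rho^\tau(t)-\rho(t)\|_{H^{-1}}\le \sup_t W_2(\rho^\tau(t),\rho(t))\to 0$, bypassing the Arzel\`a--Ascoli step entirely; the quasi-H\"older hypothesis is then not even needed for this part of the argument.
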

The proof of this result is the same as in Step 3 of Section 3.2 of \cite{MauRouSan2} (see also \cite{aude_phd} and Lemma 4.6 in \cite{DMS}). We omit it in order not to overburden the paper. 

The reader can note the strong connection with the classical Aubin-Lions lemma \cite{Aubin}, applied to the compact injection of $L^2$ into $H^{-1}$. Indeed, from the weak convergence of $p^\t$ in  $L^2([0,T]; H^1(\Om))$, we just need to provide strong convergence of $\rho^\t$ in $L^2([0,T]; H^{-1}(\Om))$. If instead of the quasi-H\"older assumption of the above lemma we suppose a uniform bound of  $\{\rho^\t\}_\t$ in $AC^2([0,T];\mathcal W_2(\O))$ (which is not so different), then the statement can be really deduced from the Aubin-Lions lemma. Indeed, the sequence  $\{\rho^\t\}$ is bounded in $L^\infty([0,T]; L^2(\Om))$ and its time-derivative would be bounded in $L^2([0,T]; H^{-1}(\Om))$. This strongly depends on the fact that the $H^{-1}$ distance can be controlled by the $W_2$ distance as soon as the measures have uniformly bounded densities (see \cite{loeper,MauRouSan2}), a tool which also crucial in the proofs in \cite{MauRouSan2,aude_phd,DMS}. Then, the Aubin-Lions lemma guarantees compactness in $C^0([0,T];H^{-1}(\O))$, which is more than what we need. 

\begin{lemma}\label{tools}
Let us consider the previously defined interpolations. Then we have the following facts.
\begin{itemize}
\item[{$\rm{(i)}$}] For every $\t>0$ and $k$ we have 
$$\max\left\{W_2^2(\rho_k^\t,\tilde\rho_{k+1}^\t), W_2^2(\rho_{k}^\t,\rho_{k+1}^\t)\right\}\le \t C\left(\cE(\rho_k^\t)-\cE(\rho_{k+1}^\t)\right)+C\t^2,$$
where $C>0$ only depends on $\|u\|_{L^\infty}.$ 

\item[(ii)] There exists a constant $C$, only depending on $\rho_0$ and $\|u\|_{L^\infty}$, such that $\cB_2(E^\t,\rho^\t)\le C$, $\cB_2(\tilde E^\t,\tilde\rho^\t)\le C$ and $\cB_2(\hat E^\t,\hat\rho^\t)\le C$.

\item[(iii)] For the curve $[0,T]\ni t\mapsto\rho_t^\t$ we have that  $$\int_0^T|(\rho_t^\tau)'|^2_{W_2}\dd t\le C,$$ for a $C>0$ independent of $\t$. Here we denoted by $|(\rho_t^\tau)'|_{W_2}$ the metric derivative of the curve $\rho^\tau$ at $t$ in $\mathcal W_2$. In particular, we have a uniform H\"older bound on $\rho^\tau$: $W_2(\rho^\tau(a),\rho^\tau(b))\leq C\sqrt{b-a}$ for every $b>a$.

\item[(iv)] $E^\t,\tilde E^\t, \hat E^\t$ are uniformly bounded sequences in $\fM([0,T]\times\Om)^d.$  
\end{itemize}
\end{lemma}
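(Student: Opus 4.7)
For (i), the plan is to use Benamou--Brenier and Lemma \ref{entropy-FP} on the Fokker--Planck step. Since $\tilde\rho_{k+1}^\tau=\varrho_\tau$ is reached from $\rho_k^\tau$ along a continuous curve in $\mathcal W_2(\Om)$, Cauchy--Schwarz gives
\[
W_2^2(\rho_k^\tau,\tilde\rho_{k+1}^\tau)\le \tau\int_0^\tau |\varrho_t'|_{W_2}^2\dd t\le 2\tau\bigl(\cE(\rho_k^\tau)-\cE(\tilde\rho_{k+1}^\tau)\bigr)+\|u\|_\infty^2\tau^2.
\]
Using Lemma \ref{proj_entropy} to replace $\cE(\tilde\rho_{k+1}^\tau)$ by $\cE(\rho_{k+1}^\tau)$ yields the first bound. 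For the second, since $\rho_k^\tau\in\cK$ and $\rho_{k+1}^\tau=P_\cK[\tilde\rho_{k+1}^\tau]$ is the $W_2$-projection on $\cK$, we have $W_2(\tilde\rho_{k+1}^\tau,\rho_{k+1}^\tau)\le W_2(\tilde\rho_{k+1}^\tau,\rho_k^\tau)$; the triangle inequality then gives $W_2(\rho_k^\tau,\rho_{k+1}^\tau)\le 2W_2(\rho_k^\tau,\tilde\rho_{k+1}^\tau)$, and we square.

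For (ii), I would treat the three interpolations separately, summing estimates over $k$ and telescoping the entropy. On the first half-interval $[k\tau,(k+1/2)\tau[$, the time-rescaling factor $2$ produces a factor $4$ in $|v^\tau|^2$ but only $\dd t=\dd s/2$, so Lemma \ref{entropy-FP} (applied to $\varrho$ on $[0,\tau]$) gives
\[
\int_{k\tau}^{(k+1/2)\tau}\!\!\int_\Om |v^\tau_t|^2\rho^\tau_t\dd x\dd t\le 4(\cE(\rho_k^\tau)-\cE(\rho_{k+1}^\tau))+2\|u\|_\infty^2\tau.
\]
On the second half-interval the curve is the McCann geodesic between $\tilde\rho_{k+1}^\tau$ and $\rho_{k+1}^\tau$ traversed in time $\tau/2$, whose kinetic energy equals $\frac{2}{\tau}W_2^2(\tilde\rho_{k+1}^\tau,\rho_{k+1}^\tau)\le\frac{2}{\tau}W_2^2(\rho_k^\tau,\tilde\rho_{k+1}^\tau)$, and we invoke (i). For $\tilde E^\tau$ we apply Lemma \ref{entropy-FP} directly on each $[k\tau,(k+1)\tau]$ and use Lemma \ref{proj_entropy}. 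For $\hat E^\tau$ we note that $(\id+\tau\nabla p_{k+1}^\tau)$ is an optimal transport, so
\[
\tau\int_\Om|\nabla p_{k+1}^\tau|^2\rho_{k+1}^\tau\dd x=\frac{W_2^2(\tilde\rho_{k+1}^\tau,\rho_{k+1}^\tau)}{\tau}\le\frac{W_2^2(\rho_k^\tau,\tilde\rho_{k+1}^\tau)}{\tau},
\]
and we close again by (i). In each case summing in $k$ produces a telescoping $\cE(\rho_0)-\cE(\rho_N^\tau)$ plus a term of order $N\tau\le T$; since $\rho_0,\rho_N^\tau\le 1$ on a bounded domain the entropies stay bounded, giving the desired uniform control.

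For (iii), the pair $(\rho^\tau,E^\tau)$ solves the continuity equation, and the general inequality $|(\rho_t^\tau)'|_{W_2}\le\|v^\tau_t\|_{L^2_{\rho^\tau_t}}$ (recalled in Section \ref{sec:2}) upgrades the $\cB_2$ bound of (ii) into the announced $L^2$-in-time estimate on the metric derivative. The quasi-H\"older inequality then follows from $W_2(\rho^\tau(a),\rho^\tau(b))\le\int_a^b|(\rho^\tau_t)'|_{W_2}\dd t\le\sqrt{b-a}\,\|\,|(\rho^\tau)'|_{W_2}\|_{L^2}$. Finally (iv) is a one-line Cauchy--Schwarz: $|E^\tau|([0,T]\times\Om)\le\int_0^T\!\!\int_\Om|v^\tau_t|\rho^\tau_t\le\sqrt{2T\,\cB_2(E^\tau,\rho^\tau)}$, using $\rho^\tau_t\in\cP(\Om)$, and identically for $\tilde E^\tau$ and $\hat E^\tau$.

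The main obstacle I expect is the bookkeeping in (ii) for $E^\tau$: the time-doubling on the Fokker--Planck half and on the geodesic half must be tracked carefully so that the scaling matches and the projection step is absorbed by (i), rather than producing an extra $1/\tau$ which would be fatal when summing $N\sim 1/\tau$ intervals. The key structural point is that the non-telescoping remainder is of size $\tau$ per step, summing to a constant times $T$.
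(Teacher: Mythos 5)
Your proposal is correct and follows essentially the same route as the paper: Cauchy--Schwarz plus Lemma \ref{entropy-FP} and Lemma \ref{proj_entropy} for (i), the split into the rescaled Fokker--Planck half-interval and the McCann geodesic half-interval with the bound $\tau\|\nabla p_{k+1}^\tau\|^2_{L^2_{\rho_{k+1}^\tau}}\le \tau^{-1}W_2^2(\rho_k^\tau,\tilde\rho_{k+1}^\tau)$ for (ii), the continuity-equation comparison $|(\rho^\tau)'|_{W_2}\le\|v^\tau_t\|_{L^2_{\rho^\tau_t}}$ for (iii), and Cauchy--Schwarz for (iv). The bookkeeping with the factor $4$ from time-doubling and the telescoping entropy sum matches the paper's computation exactly.
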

\begin{proof}
${\rm{(i)}}$ First by the triangle inequality and by the fact that $\rho_{k+1}^\tau=P_\cK[\tilde{\rho}_{k+1}^\t]$ we have that
\begin{equation}\label{tool1}
W_2(\rho_k^\t,\rho_{k+1}^\t)\le W_2(\rho_k^\t,\tilde{\rho}_{k+1}^\t)+W_2(\tilde{\rho}_{k+1}^\t,\rho_{k+1}^\t)\le2W_2(\rho_k^\t,\tilde{\rho}_{k+1}^\t).
\end{equation}

We use (as before) the notation $\varrho_t,$ $t\in[0,\t]$ for the solution of the Fokker-Planck equation \eqref{FP-basic} with initial datum $\rho_k^\t,$ in particular we have $\varrho_\t=\tilde\rho_{k+1}^\t.$ 
Using Lemma \ref{entropy-FP} and since $\varrho_0=\rho_k^\tau$ and $\varrho_\t=\tilde\rho_{k+1}^\t$ we have by \eqref{tool2} and using $\ds W_2(\rho_k^\t,\tilde\rho_{k+1}^\t)\le\int_0^\t|\varrho_t'|_{W_2}\dd t$
\begin{align*}
W_2^2(\rho_k^\t,\tilde{\rho}_{k+1}^\t)&\le\left(\t^{\frac12}\left(\int_0^\t|\varrho_t'|^2_{W_2}\dd t\right)^{\frac12}\right)^2\le 2\t\left(\cE(\varrho_0)-\cE(\varrho_\t)\right)+\t\int_0^\t\int_\Om|u_{k\t+t}|^2\varrho_t\dd x\dd t\\
&\le 2\t\left(\cE(\rho_k^\t)-\cE(\tilde\rho_{k+1}^\t)\right)+C\t^2\le 2\t\left(\cE(\rho_k^\t)-\cE(\rho_{k+1}^\t)\right)+C\t^2,
\end{align*}
where $C>0$ is a  constant depending just on $\|u\|_{L^\infty}$. We also used the fact that $\cE(\rho_{k+1}^\t)\le\cE(\tilde\rho_{k+1}^\t)$, a consequence of Lemma \ref{proj_entropy}.

Now by the means of \eqref{tool1} we obtain
$$W_2^2(\rho_k^\t,\rho_{k+1}^\t)\le \t C\left(\cE(\rho_k^\t)-\cE(\rho_{k+1}^\t)\right)+C\t^2.$$

${\rm{(ii)}}$ We use Lemma \ref{entropy-FP} on the intervals of type $[k\t,(k+1/2)\t[$ and the fact that on each interval of type $[(k+1/2)\t,(k+1)\t[$ the curve $\rho_t^\t$ is a constant speed geodesic. In particular, on these intervals we have  
$$|(\rho^\t)'|_{W_2}=\|v^\t_t\|_{L^2_{\rho^\t_t}}=2\t\|\nabla p_{k+1}^\t\|_{L^2_{\rho_{k+1}^\t}}=2W_2(\rho_{k+1}^\t,\tilde\rho_{k+1}^\t).$$
On the other hand we also have
$$\t^2\|\nabla p_{k+1}^\t\|_{L^2_{\rho_{k+1}^\t}}^2=W_2^2(\rho_{k+1}^\t,\tilde\rho_{k+1}^\t)\le W_2^2(\rho_{k}^\t,\tilde\rho_{k+1}^\t)\le \t C\left(\cE(\rho_k^\t)-\cE(\rho_{k+1}^\t)\right)+C\t^2.$$
Hence we obtain
\begin{align*}
&\int_{k\t}^{(k+1)\t}\|v^\t_t\|^2_{L^2(\rho^\t_t)}\dd t\\
&=\int_{k\t}^{(k+1/2)\t}\int_\Om 4\left|-\frac{\nabla\varrho_{2(t-k\t)}}{\varrho_{2(t-k\t)}}+u_{2t-k\t}\right|^2\varrho_{2(t-k\t)}(x)\dd x\dd t+4\int_{(k+1/2)\t}^{(k+1)\t}\int_\Om|\nabla p_{k+1}^\t|^2\rho_{k+1}^\t\dd x\dd t\\
&\le C\left(\cE(\rho_k^\t)-\cE(\rho_{k+1}^\t)\right)+C\t + 2\t\|\nabla p_{k+1}^\t\|^2_{L^2_{\rho_{k+1}^\t}}\\
&\le C\left(\cE(\rho_k^\t)-\cE(\rho_{k+1}^\t)\right)+C\t.
\end{align*}
Hence by adding up we obtain 
$$\cB_2(E^\tau,\rho^\tau)\le \sum_k \left\{C\left(\cE(\rho_k^\t)-\cE(\rho_{k+1}^\t)\right)+C\t\right\}=C\left(\cE(\rho_0^\t)-\cE(\rho_{N+1}^\t)\right)+CT\le C.$$

The estimate on $\cB_2(\tilde E^\tau,\tilde\rho^\tau)$ and $\cB_2(\hat E^\tau,\hat \rho^\tau)$ are completely analogous and descend from the previous computations.

${\rm{(iii)}}$ The estimate on $\cB_2(E^\tau,\rho^\tau)$ implies a bound on $\ds\int_0^T|(\rho_t^\tau)'|^2_{W_2}\dd t$ because $v^\t$ is a velocity field for $\rho^\t$ (i.e., the pair $(E^\tau,\rho^\tau)$ solves the continuity equation).

${\rm{(iv)}}$ In order to estimate the total mass of $E$ we write
\begin{align*}
|E^\t|([0,T]\times\Om)&=\int_0^T\int_\Om|v_t^\t|\rho_t^\t\dd x\dd t\le\int_0^T\left(\int_\Om |v_t^\t|^2\rho_t^\t\dd x\right)^\frac12\left(\int_\Om\rho_t^\t\dd x\right)^\frac12\dd t\\
&\le \sqrt{T}\left(\int_0^T\int_\Om|v_t^\t|^2\rho_t^\t\dd x\dd t\right)^\frac12\le C.
\end{align*}  
The bounds on $\tilde E^\t$ and $\hat E^\t$ rely on the same argument. 
\end{proof}

\begin{proof}[Proof of Theorem \ref{convergence}]
We use the tools from Lemma \ref{tools}. 

{\it Step 1.}
By the bounds on the metric derivative of the curves $\rho_t^\t$ we get compactness, i.e. there exists a curve $[0,T]\ni t\mapsto\rho_t\in\cP(\Om)$ such that $\rho^\t$ (up to subsequences) converges uniformly in $[0,T]$ w.r.t. $W_2,$ in particular weakly-$*$ in $\cP(\Om)$ for all $t\in[0,T].$ It is easy to see that $\tilde\rho^\t$ and $\hat\rho^\t$ are converging to the same curve. Indeed we have $\tilde\rho_t^\t=\rho_{\tilde s(t)}^\t$ and $\hat\rho_t^\t=\rho_{\hat s(t)}^\t$ for $|\tilde s(t)-t|\leq \t$ and $|\hat s(t)-t|\leq \t$, which implies  $W_2(\rho_t^\t,\tilde\rho_t^\t),W_2(\rho_t^\t,\hat\rho_t^\t) \le C\t^\frac12$. This provides the convergence to the same limit.

{\it Step 2.} By the boundedness of $E^\t,\tilde E^\t$ and $\hat E^\t$ in $\fM([0,T]\times\Om)^d$ we have the existence of $E,\tilde E,\hat E\in\fM([0,T]\times\Om)^d$ such that (up to a subsequence) $E^\t\weaks E,\tilde E^\t\weaks \tilde E, \hat E^\t\weaks \hat E$ as $\t\to 0.$ Now we show that $E=\tilde E-\hat E.$ Indeed, let us show that for any test function $f\in \Lip([0,T]\times\Om)^d$ we have 
$$\left|\int_0^T\int_\Om f_t\cdot \left(E_t^\t-(\tilde E_t^\t+\hat E_t^\t)\right)(\dd x,\dd t)\right|\to 0,$$
as $\t\to 0.$ First for each $k\in\{0,\dots,N\}$ we  have that
\begin{eqnarray*}
\int_{k\t}^{(k+1/2)\t}\int_\Om f_t \cdot E_t^\t(\dd x,\dd t)&=&\int_{k\t}^{(k+1)\t}\int_\Om f_{(t+k\t)/2}\cdot(-\nabla \varrho_{t-k\t}+u_{t}\varrho_{t-k\t})(\dd x,\dd t)\\
&=&\int_{k\t}^{(k+1)\t}\int_\Om f_t \cdot \tilde E_t^\t(\dd x,\dd t)+\int_{k\t}^{(k+1)\t}\int_\Om \left( f_{(t+k\t)/2}-f_t\right)\cdot\tilde E_t^\t(\dd x,\dd t) \end{eqnarray*}
and
\small
\begin{eqnarray*}
\int_{(k+1/2)\t}^{(k+1)\t}\int_\Om f_t \cdot E_t^\t(\dd x,\dd t)&=&\int_{k\t}^{(k+1)\t}\int_\Om -f_{(t+(k+1)\t)/2}\circ(\id+((k+1)\t-t)\nabla p_{k+1}^\t)\cdot\nabla p_{k+1}^{\t}\rho_{k+1}^\t(\dd x,\dd t)\\
&=&-\int_{k\t}^{(k+1)\t}\int_\Om f_t \cdot \hat E_t^\t(\dd x,\dd t)\\
&&+\int_{k\t}^{(k+1)\t}\int_\Om \left(f_t-f_{(t+(k+1)\t)/2}\circ(\id+((k+1)\t-t))\right)\cdot\hat v^\t_t\hat \rho^\t_t(\dd x,\dd t)\end{eqnarray*}
\normalsize
This implies that
\begin{align*}
\Bigg{|}\int_0^T\int_\Om f_t\cdot (E_t^\t&-\tilde E_t^\t+\hat E_t^\t)(\dd x,\dd t)\Bigg{|} \le \sum_k\int_{k\t}^{(k+1)\t}\Lip(f)\t\int_\Om |\tilde E^\tau_t|(\dd x,\dd t)\\
&+\sum_k\int_{k\t}^{(k+1)\t}\Lip(f)\tau\int_\Om (1+|\hat v^\t_t|)|\hat E_t^\t|(\dd x,\dd t)\\
&\le \t C\Lip(f) \left(|\tilde E^\t|([0,T]\times\Om)+ |\hat E^\t|([0,T]\times\Om)+\cB_2(\hat E,\hat\rho)\right)\\
&\le  \t C\Lip(f),
\end{align*}
for a uniform constant $C>0$. Letting $\t\to 0$ we prove the claim. 

{\it Step 3.} The bounds on $\cB_2(E^\t,\rho^\t), \cB_2(\tilde E^\t,\tilde\rho^\t)$ and $\cB_2(\hat E^\t,\hat\rho^\t)$ pass to the limit by semicontinuity and allow to conclude that $E, \tilde E$ and $\hat E$ are vector valued Radon measures absolutely continuous w.r.t. $\rho.$  Hence there exist $v_t,\tilde v_t,\hat v_t$ such that $E=\rho v$, $\tilde E = \rho \tilde v$ and $\hat E=\rho\hat v.$ 

{\it Step 4.} We now look at the equations satisfied by $E, \tilde E$ and $\hat E$. First we use $\partial_t \rho^\t+\nabla\cdot E^\t=0$, we pass to the limit as $\t\to 0$, and we get 
$$\partial_t \rho+\nabla\cdot E=0.$$

Then, we use $\tilde E^\t=-\nabla\tilde\rho^\t+u_t\tilde\rho^\t$, we pass to the limit again as $\t\to 0$, and we get 
$$\tilde E=-\nabla\rho+u_t\rho.$$
To justify the above limit, the only delicate point is passing to the limit the term $u_t\tilde\rho^\t$, since $u$ is only $L^\infty$, and $\tilde\rho^\t$ converges weakly as measures, and we are a priori only allowed to multiply it by continuous functions. Yet, we remark that by Corollary \ref{entropy_small} we have that $\cE(\tilde\rho_t^\t)\le C\t$ for all $t\in[0,T]$. In particular, this provides, for each $t$, uniform integrability for $\tilde\rho_t^\t$ and turns the weak convergence as measures into weak convergence in $L^1$. This allows to multiply by $u_t$ in the weak limit.

Finally, we look at $\hat E^\t$. There exists a piecewise constant (in time) function $p^\t$ (defined as $p_{k+1}^\t $ on every interval $]k\t,(k+1)\t]$) such that $p^\t\geq 0$, $p^\t(1-\hat\rho^\t)=0$, 
\begin{equation}\label{L2H1bound p}
\int_0^T\int_\Om |\nabla p^\t|^2 (\dd x,\dd t)=\int_0^T\int_\Om |\nabla p^\t|^2 \hat\rho^\t (\dd x,\dd t)=\int_0^T\int_\Om |\hat v^\t|^2 \hat\rho^\t (\dd x,\dd t)\leq C
\end{equation}
and $\hat E^\t=\nabla p^\t \hat\rho^\t=\nabla p^\t $. The bound \eqref{L2H1bound p} implies that $p^\t$ is uniformly bounded in 
$L^2(0,T;H^1(\Om))$. Since for every $t$ we have $|\{p^\t_t=0\}|\geq |\{\hat\rho^\t_t<1\}|\geq |\Om|-1$, we can use a suitable version of Poincar\'e's inequality, and get a uniform bound in $L^2([0,T];L^2(\Om))=L^2([0,T]\times \Om)$. Hence there exists $p\in L^2([0,T]\times \Om)$ such that $p^\t\weak p$ weakly in $L^2$ as $\t\to 0.$ In particular we have $\hat E=\nabla p$. Moreover it is clear that $p\ge 0$ and by Lemma \ref{pressure} we obtain $p(1-\rho)=0$ a.e. as well. Indeed, the assumptions of the Lemma are easily checked: we only need to estimate $W_2(\hat\rho^\t(a),\hat\rho^\t(b))$ for $b>a$, but we have
$$W_2(\hat\rho^\t(a),\hat\rho^\t(b))=W_2(\rho^\t(k_a\t),\rho^\t(k_b \t))\leq C\sqrt{k_b-k_a},\quad\mbox{ for $k_b\t\leq b+\t$ and $k_a\geq a$}.$$
Once we have $\hat E=\nabla p$ with $p(1-\rho)=0$, $p\in L^2([0,T];H^1(\Om))$ and $\rho\in L^\infty$, we can also write 
$$\hat E=\nabla p=\rho\nabla p.$$

If we sum up our results, using $E=\tilde E-\hat E$, we have
$$\partial_t \rho -\Delta\rho+\nabla\cdot (\rho(u-\nabla p))=0\;\;\;\mbox{together with }p\geq 0,\,\rho\leq 1,\,p(1-\rho)=0\;\;{\rm{a.e.\ in\ }}[0,T]\times\Omega.$$
As usual, this equation is satisfied in a weak sense, with no-flux boundary conditions.
\end{proof}

\section{Uniform $\Lip([0,T];\mathcal W_1)$ and $BV$ estimates}\label{sec:bv}

In this section we provide uniform estimates for the curves $\rho^\t,\tilde\rho^\t$ and $\hat\rho^\t$ of the following form: we prove uniform $BV$ (in space) bounds on $\tilde\rho^\t$ (which implies the same bound for $\hat\rho^\t$) and uniform Lipschitz bounds in time for the $W_1$ distance on $\rho^\t$. 
This means a small improvement compared to the previous section in what concerns time regularity, as we have Lipschitz instead of $AC^2$, even if we need to replace $W_2$ with $W_1$. It is also important in what concerns space regularity. Indeed, from Lemma \ref{entropy-FP} one could deduce that the solution $\rho$ of the FP equation \eqref{fokker2} satifies $\sqrt{\rho}\in L^2([0,T];H^1(\O))$ and, using $\rho\leq 1$, also $\rho\in L^2([0,T];H^1(\O))$. Yet, this is just an integrable estimate in $t$, while the $BV$ estimate of this section is uniform in the time variable.

Nevertheless there is a price to pay for this improvement: we have to assume higher regularity for the velocity field. These uniform-in-time $W_1$-Lipschitz bounds are based both on $BV$ estimates for the Fokker-Planck equation (see Lemma \ref{bv_estimate1} from Appendix A) and for the projection operator $P_\cK$ (see \cite{gafb}). The assumption on $u$ is essentially the following: we need to control the growth of the total variation of the solutions of the Fokker-Planck equation \eqref{FP-basic}, and we need to iterate this bound along time steps.

We will discuss in the Appendix the different $BV$ estimates on the Fokker-Planck equation that we were able to find. The desired estimate is true whenever $\|u_t\|_{C^{1,1}(\Om)}$ is uniformly bounded  $u_t\cdot n=0$ on $\partial\Om$. It seems to be an open problem to obtain similar estimate under the only assumption that $u$ is Lipschitz continuous. Of course, we will also assume $\rho_0\in BV(\Omega)$. Despite these extra regularity assumptions, we think these estimates have their own interest, exploiting some finer properties of the solutions of the Fokker-Planck equation and of the Wasserstein projection operator. 

Before entering into the details of the estimates, we want to discuss why we concentrate on $BV$ estimates (instead of Sobolev ones) and on $W_1$ (instead of $W_p$, $p>1$). The main reason is the role of the projection operator: indeed, even if $\rho\in W^{1,p}(\Om)$, we do not have in general $P_\cK[\rho]\in W^{1,p}$ because the projection creates some jumps at the boundary of $\{P_\cK[\rho]=1\}$. This prevents from obtaining any $W^{1,p}$ estimate for $p>1$. On the other hand, \cite{gafb} exactly proves a $BV$ estimate on $P_\cK[\rho]$ and paves the way to $BV$ bounds for our equation. Concerning the regularity in time, we observe that the velocity field in the Fokker-Planck equation contains a term in $\nabla\rho/\rho$. Since the metric derivative in $\mathcal W_p$ is given by the $L^p$ norm (w.r.t. $\rho_t$) of the velocity field, it is clear that estimates in $\mathcal W_p$ for $p>1$ would require spatial $W^{1,p}$ estimates on the solution itself, which are impossible for $p>1$ in this splitting scheme. We underline that this does not mean that uniform $W^{1,p}$ are impossible for the solution of \eqref{fokker2}; it only means that they are not uniform along the approximation that we used in our \emph{Main Scheme} to build such a solution.

The precise result that we prove is the following.
\begin{theorem}\label{lip-estimate}
Let us suppose that $\|u_t\|_{C^{1,1}}\leq C$ and $\rho_0\in BV(\Om)$. Then using the notations from the \emph{Main scheme} and Theorem \ref{convergence} one has $\|\tilde\rho^\t_t\|_{BV}\leq C$ and $W_1(\rho^\t_k,\rho^\t_{k+1})\leq C\t$. As a consequence we also have $\rho\in\Lip([0,T];\mathcal W_1)\cap L^\infty([0,T];BV(\Om))$.
\end{theorem}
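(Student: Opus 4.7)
The plan is to propagate a $BV$ bound across each time step of the Main scheme by combining two ingredients: the $BV$ estimate for the Fokker-Planck equation stated as Lemma \ref{bv_estimate1} in the Appendix (available under our assumptions $\|u_t\|_{C^{1,1}}\leq C$ and $u_t\cdot n=0$ on $\partial\Om$) and the non-expansivity $TV(P_\cK[\mu],\Om)\leq TV(\mu,\Om)$ from \cite{gafb}, recalled in Section \ref{subsec:proj}. The former yields an inequality that iterates as $\|\tilde\rho_{k+1}^\tau\|_{BV}\leq (1+C\tau)\|\rho_k^\tau\|_{BV}+C\tau$, and the latter gives $\|\rho_{k+1}^\tau\|_{BV}\leq \|\tilde\rho_{k+1}^\tau\|_{BV}$. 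Composing them and applying a discrete Gronwall argument over $N=\lfloor T/\tau\rfloor$ steps produces a bound $\|\rho_k^\tau\|_{BV}\leq C(T,\|\rho_0\|_{BV})$ uniform in $\tau$ and $k$. Applying once more the FP estimate on subintervals $[k\tau,k\tau+s]$ with $s\leq\tau$ starting from $\rho_k^\tau$ extends this to every intermediate time, so that $\|\tilde\rho^\tau_t\|_{BV}\leq C$ uniformly in $t$ and $\tau$ (and the same follows for $\hat\rho^\tau$ via the projection estimate).

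For the $W_1$-bound per step, I would treat the two halves separately. The Fokker-Planck equation in continuity form $\partial_t\varrho+\nabla\cdot(\varrho v)=0$ has momentum $\varrho v=-\nabla\varrho+u\varrho$, hence $\|\varrho_t v_t\|_{L^1}\leq TV(\varrho_t)+\|u\|_{L^\infty}\leq C$ uniformly. Since the $\mathcal W_1$ metric derivative is bounded by the $L^1$ norm of the momentum, this gives $W_1(\rho_k^\tau,\tilde\rho_{k+1}^\tau)\leq C\tau$. The projection half-step is the main obstacle: the naive estimate $W_1\leq\sqrt{|\Om|}\,W_2$ combined with the $W_2$ bound of Lemma \ref{tools} only yields $O(\sqrt\tau)$. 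To gain the sharper estimate I would use the saturation structure $\rho_{k+1}^\tau=\one_B+\tilde\rho_{k+1}^\tau\one_{B^c}$ from Section \ref{subsec:proj}, which gives $\tilde\rho_{k+1}^\tau-\rho_{k+1}^\tau=(\tilde\rho_{k+1}^\tau-1)\one_B$. Since $\rho_k^\tau\leq 1$, the maximum principle for the Fokker-Planck equation (yielding $\|\varrho_t\|_\infty\leq e^{t\|\nabla\cdot u\|_\infty}\|\varrho_0\|_\infty$, where $u\in C^{1,1}$ controls $\nabla\cdot u$ and $u\cdot n=0$ rules out boundary maxima) gives $\tilde\rho_{k+1}^\tau\leq 1+C\tau$ pointwise. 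Combined with mass conservation on $B$ (which forces $\int_B(\tilde\rho_{k+1}^\tau-1)=0$), this bounds $\|\tilde\rho_{k+1}^\tau-\rho_{k+1}^\tau\|_{L^1(\Om)}\leq 2C\tau|\Om|$, and the elementary inequality $W_1(\mu,\nu)\leq\tfrac{1}{2}\mathrm{diam}(\Om)\|\mu-\nu\|_{L^1}$ between probability densities yields $W_1(\tilde\rho_{k+1}^\tau,\rho_{k+1}^\tau)\leq C\tau$. The triangle inequality then gives the desired $W_1(\rho_k^\tau,\rho_{k+1}^\tau)\leq C\tau$.

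Finally, I pass to the continuous limit. From Theorem \ref{convergence}, $\tilde\rho^\tau\to\rho$ uniformly in $\mathcal W_2$, hence also in $\mathcal W_1$; extracting an $L^1$-convergent subsequence in $t$ and using lower semicontinuity of the $BV$-seminorm gives $\|\rho_t\|_{BV}\leq C$ for every $t$, that is $\rho\in L^\infty([0,T];BV(\Om))$. The discrete estimate $W_1(\rho_k^\tau,\rho_{k+1}^\tau)\leq C\tau$ implies $W_1(\rho^\tau(s),\rho^\tau(t))\leq C(|t-s|+\tau)$ on any time interval (summing across full steps, and handling partial steps via the same per-piece bounds applied to either the FP sub-piece or the McCann geodesic sub-piece, both of which inherit $W_1$-length $O(\tau)$ from the two estimates above), and passing $\tau\to 0$ yields $\rho\in\mathrm{Lip}([0,T];\mathcal W_1)$. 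The key conceptual point, and the part I expect to require the most care, is the projection $W_1$-estimate, which is where the extra regularity $u\in C^{1,1}$ is really used (via the maximum principle) to upgrade the $O(\sqrt\tau)$ bound coming from optimal-transport considerations alone to the $O(\tau)$ bound dictated by the structure of $P_\cK$.
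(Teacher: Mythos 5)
Your proof follows essentially the same approach as the paper: iterate the Fokker--Planck $BV$ estimate (Lemma \ref{bv_estimate1}) with the non-expansivity of $P_\cK$ in $BV$ from \cite{gafb} to get uniform spatial $BV$ bounds; bound the $W_1$ displacement of the FP half-step via the $L^1$ norm of the momentum $-\nabla\varrho+u\varrho\lesssim TV(\varrho)+\|u\|_\infty$; and bound the $W_1$ displacement of the projection half-step via the saturation structure combined with the $L^\infty$ bound $\tilde\rho_{k+1}^\tau\leq 1+C\tau$ of Lemma \ref{tool3} --- this last step is exactly the content of Corollary \ref{tool4}. Your version of that corollary passes through the $L^1$ bound $\|\tilde\rho_{k+1}^\tau-\rho_{k+1}^\tau\|_{L^1}\leq 2C\tau|\Om|$ via the mass-balance identity $\int_B(\tilde\rho_{k+1}^\tau-1)=0$, whereas the paper bounds the Kantorovich dual $\sup_{0\leq f\leq \mathrm{diam}\,\Om,\ \mathrm{Lip}(f)\leq 1}\int f(\tilde\rho_{k+1}^\tau-\rho_{k+1}^\tau)$ directly by discarding the negative part of $(\tilde\rho_{k+1}^\tau-1)$; both are correct and equivalent in spirit. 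The passage to the limit is also the same, modulo your choice to handle partial steps of $\rho^\tau$ rather than working with the piecewise-constant $\hat\rho^\tau$ as the paper does.
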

To prove this theorem we need the following lemmas.
\begin{lemma}\label{tool3}
Suppose $\|u_t\|_{\Lip}\leq C$ and $u_t\cdot n=0$ on $\partial\Om$. Then for the solution $\varrho$ of \eqref{FP-app} with velocity field $v=u$ we have the estimate
$$\|\varrho_t\|_{L^\infty}\le\|\varrho_0\|_{L^\infty} e^{Ct},$$
where $C=\|\nabla\cdot u_t\|_{L^\infty}$.
\end{lemma}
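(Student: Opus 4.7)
The plan is a direct maximum-principle (Stampacchia-type) argument after absorbing the expected exponential factor by a gauge change. I would set $\tilde\varrho_t:=e^{-Ct}\varrho_t$, which reduces the claim to showing the simpler bound $\tilde\varrho_t\le\|\varrho_0\|_{L^\infty}$ uniformly in $t\in[0,T]$.

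Expanding $\partial_t\varrho_t=\Delta\varrho_t-u_t\cdot\nabla\varrho_t-(\nabla\cdot u_t)\varrho_t$ and differentiating the product $\varrho=e^{Ct}\tilde\varrho$, one checks that $\tilde\varrho$ satisfies
$$\partial_t\tilde\varrho_t-\Delta\tilde\varrho_t+u_t\cdot\nabla\tilde\varrho_t+(C+\nabla\cdot u_t)\tilde\varrho_t=0,$$
while the no-flux boundary condition $(\nabla\varrho-u\varrho)\cdot n=0$, combined with the hypothesis $u\cdot n=0$, translates into the homogeneous Neumann condition $\nabla\tilde\varrho\cdot n=0$. The point of the gauge change is that with the choice $C=\|\nabla\cdot u\|_{L^\infty}$ the zeroth-order coefficient $C+\nabla\cdot u_t$ is pointwise nonnegative, while $\tilde\varrho_t\ge 0$ is preserved by the Fokker-Planck flow.

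Next, I would set $\phi_t:=\tilde\varrho_t-\|\varrho_0\|_{L^\infty}$, so that $\phi_0\le 0$ a.e.\ and $\partial_t\phi_t-\Delta\phi_t+u_t\cdot\nabla\phi_t=-(C+\nabla\cdot u_t)\tilde\varrho_t\le 0$. Testing this inequality against $\phi_t^+$ and integrating on $\Om$, the Laplacian term produces $\int_\Om|\nabla\phi_t^+|^2\dd x$ via Neumann, and the drift term, after writing $(u_t\cdot\nabla\phi_t^+)\phi_t^+=\tfrac12 u_t\cdot\nabla(\phi_t^+)^2$ and integrating by parts using $u\cdot n=0$, becomes $-\tfrac12\int_\Om(\nabla\cdot u_t)(\phi_t^+)^2\dd x$. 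Hence
$$\frac12\frac{\dd}{\dd t}\int_\Om(\phi_t^+)^2\dd x+\int_\Om|\nabla\phi_t^+|^2\dd x\le\frac{C}{2}\int_\Om(\phi_t^+)^2\dd x,$$
and since $\phi_0^+\equiv 0$, Gronwall yields $\phi_t^+\equiv 0$, i.e.\ $\tilde\varrho_t\le\|\varrho_0\|_{L^\infty}$, which is the desired bound.

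The main obstacle is purely one of regularity: the calculation above is formal unless $\varrho$ is smooth enough for the integrations by parts and the chain rule $\phi\mapsto\phi^+$ to be licit (it is enough, for instance, that $\varrho\in L^2([0,T];H^1(\Om))$). Exactly as in the proof of Lemma \ref{entropy-FP}, I would proceed by approximation, regularizing $u$ and $\varrho_0$ so that $\|\nabla\cdot u^k\|_{L^\infty}\to\|\nabla\cdot u\|_{L^\infty}$, applying the estimate to the smooth approximants (where classical parabolic theory provides smooth positive solutions and standard maximum principles), and then passing to the limit using uniqueness of the Fokker-Planck equation with bounded drift together with the lower semicontinuity of the $L^\infty$ norm under weak-$*$ convergence.
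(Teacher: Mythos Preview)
Your proof is correct. Both your argument and the paper's rest on the same key observation: the constant-in-space function $f(t,x)=\|\varrho_0\|_{L^\infty}e^{Ct}$ is a supersolution of the Fokker-Planck equation (your gauge change $\tilde\varrho=e^{-Ct}\varrho$ is exactly the reformulation of the inequality $\varrho\le f$ as $\tilde\varrho\le\|\varrho_0\|_{L^\infty}$). The difference is only in execution: the paper simply invokes ``standard comparison theorems for parabolic equations'' once the supersolution inequality $\partial_t f\ge\Delta f-\nabla\cdot(fu)$ has been checked, whereas you carry out the comparison by hand via a Stampacchia-type $L^2$ energy estimate on $(\tilde\varrho-\|\varrho_0\|_{L^\infty})_+$. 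Your route is more self-contained and makes the role of the hypothesis $u\cdot n=0$ explicit in the integrations by parts (and your approximation remark handles the regularity issue cleanly), at the cost of being longer; the paper's version is essentially a two-line application of a black-box parabolic comparison principle.
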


\begin{proof}
Standard comparison theorems for parabolic equations allow to prove the results once we notice that $f(t,x):=\|\varrho_0\|_{L^\infty} e^{Ct}$ is a supersolution of the Fokker-Planck equation, i.e.
$$\partial_tf_t\geq \Delta f_t -\nabla\cdot(f_t u_t).$$
Indeed, in the above equation the Laplacian term vanishes as $f$ is constant in $x$, $\partial_t f_t=Cf_t$ and $\nabla\cdot(f_t u_t)=f_t\nabla\cdot u_t+\nabla f_t\cdot u_t=f_t\nabla\cdot u_t\leq Cf_t$ where  $C=\|\nabla\cdot u_t\|_{L^\infty}$.
From this inequality, and from $\rho_0\leq f_0$, we deduce $\rho_t\leq f_t$ for all $t$.
\end{proof}
We remark that the above lemma implies in particular that after every step in the {\it Main scheme} we have $\tilde\rho_{k+1}^\t\le e^{\t c}\leq 1+C\t,$ where $c:=\|\nabla\cdot u\|_{L^\infty}.$ Let us now present the following lemma as well.

\begin{corollary}\label{tool4}
Along the iterations of our ${\rm{Main\ scheme}}$, for every $k$ we have $W_1(\tilde\rho_{k+1}^\t,\rho_{k+1}^\t)\le \t C$ for a constant $C>0$ independent of $\tau$. 
\end{corollary}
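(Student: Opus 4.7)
My plan is to combine the $L^\infty$ propagation estimate provided by Lemma \ref{tool3} with the explicit saturation structure of the projection $P_\cK$ recalled in Section \ref{subsec:proj}, and then convert the resulting pointwise bound into a $W_1$ estimate via Kantorovich--Rubinstein duality. The whole argument should be very short, but it is worth noticing that this is really where the $W_1$ (as opposed to $W_2$) distance enters: a bound of size $\t$ for $W_2$ is not available from the energy estimates of Lemma \ref{tools}, and only the weaker $W_1$ metric allows one to convert an ``$L^\infty$-excess of order $\t$'' into a displacement of order $\t$.

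First I would invoke the remark immediately following Lemma \ref{tool3}: since $\rho_k^\t\in\cK$ satisfies $\rho_k^\t\le 1$, and the Fokker--Planck equation \eqref{FP-basic} is run for time $\t$ with a field whose divergence is uniformly bounded (thanks to the standing assumption $\|u_t\|_{C^{1,1}}\le C$ together with $u_t\cdot n=0$ on $\partial\Om$), Lemma \ref{tool3} yields
\[
\tilde\rho_{k+1}^\t\le e^{c\t}\le 1+C\t,
\]
where $c:=\|\nabla\cdot u\|_{L^\infty}$ and $C>0$ is independent of $\t$ and $k$.

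Next I would use the description of the projection from Section \ref{subsec:proj}: there exists a measurable set $B\subseteq\Om$ such that $\rho_{k+1}^\t=\one_B+\tilde\rho_{k+1}^\t\one_{B^c}$. Since $\rho_{k+1}^\t\le 1$ globally and $\rho_{k+1}^\t=\tilde\rho_{k+1}^\t$ on $B^c$, necessarily $\tilde\rho_{k+1}^\t\le 1$ on $B^c$, so $\{\tilde\rho_{k+1}^\t>1\}\subseteq B$ up to a negligible set. A direct case distinction on $B$ and $B^c$ then gives the pointwise identity
\[
(\tilde\rho_{k+1}^\t-\rho_{k+1}^\t)_+=(\tilde\rho_{k+1}^\t-1)_+\le C\t,
\]
hence $\int_\Om(\tilde\rho_{k+1}^\t-\rho_{k+1}^\t)_+\dd x\le C\t|\Om|$.

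To conclude, I would apply Kantorovich--Rubinstein duality. For any $1$-Lipschitz function $\phi:\Om\to\R$, shifting by a constant we may assume $0\le\phi\le\mathrm{diam}(\Om)$, and since $\tilde\rho_{k+1}^\t$ and $\rho_{k+1}^\t$ have the same total mass,
\[
\int_\Om\phi\,\dd(\tilde\rho_{k+1}^\t-\rho_{k+1}^\t)\le\mathrm{diam}(\Om)\int_\Om(\tilde\rho_{k+1}^\t-\rho_{k+1}^\t)_+\dd x\le C'\t.
\]
Taking the supremum over such $\phi$ yields $W_1(\tilde\rho_{k+1}^\t,\rho_{k+1}^\t)\le C'\t$, with $C'$ depending only on $\|u\|_{C^{1,1}}$ and $\Om$, as claimed. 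No serious obstacle is expected: the only mildly delicate point is the choice of the metric, since the same reasoning would fail for $W_2$ (one would get only $W_2\le C\sqrt{\t}$), and this is precisely the reason why the uniform-in-time estimate of Theorem \ref{lip-estimate} is stated in $\mathcal W_1$ rather than in $\mathcal W_2$.
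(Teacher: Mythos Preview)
Your proof is correct and follows essentially the same route as the paper's: invoke Lemma \ref{tool3} to get $\tilde\rho_{k+1}^\t\le 1+C\t$, use the saturation structure of $P_\cK$ to localise the difference on the saturated set, and then apply Kantorovich--Rubinstein duality with test functions normalised to lie in $[0,\mathrm{diam}(\Om)]$. Your formulation via the positive part $(\tilde\rho_{k+1}^\t-\rho_{k+1}^\t)_+=(\tilde\rho_{k+1}^\t-1)_+$ is a slight repackaging of the paper's direct computation on the saturated set, but the substance is identical.
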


\begin{proof}
With the saturation property of the projection (see Section \ref{subsec:proj} or \cite{gafb}), we know that there exists a measurable set $B\subseteq\Om$ such that  $\rho_{k+1}^\t=\tilde\rho_{k+1}^\t\one_B+\one_{\Om\setminus B}.$ On the other hand we know that 
\begin{eqnarray*}
W_1(\tilde\rho_{k+1}^\t,\rho_{k+1}^\t)&=&\sup_{f\in\Lip_{1}(\Omega),\,0\leq f\leq \rm{diam}(\Omega)}\int_\Om f(\tilde\rho_{k+1}^\t-\rho_{k+1}^\t)\dd x\\
&=&\sup_{f\in\Lip_{1}(\Omega),\,0\leq f\leq \rm{diam}(\Omega)}\int_{\Om\setminus B}\! f(\tilde\rho_{k+1}^\t-1)\dd x\le\t C\,|\Om|\rm{diam}(\Om).
\end{eqnarray*}
We used the fact that the competitors $f$ in the dual formula can be taken positive and bounded by the diameter of $\Om$, just by adding a suitable constant. This implies as well that $C$ is depending on $c,|\Om|$ and ${\rm{diam}(\Om)}.$
\end{proof}

\begin{proof}[Proof of Theorem \ref{lip-estimate}]

First we take care of the $BV$ estimate. Lemma \ref{bv_estimate1} in the Appendix guarantees, for $t\in ]k\t,(k+1)\t[,$ that we have $TV(\tilde\rho^\t_t)\leq C\t+e^{C\t} TV(\rho^\t_k)$. Together with the $BV$ bound on the projection that we presented in Section \ref{subsec:proj} (taken from \cite{gafb}), this can be iterated, providing a uniform bound (depending on $TV(\rho_0)$, $T$ and $\sup_t \|u_t\|_{C^{1,1}}$) on $\|\tilde\rho^\t_t\|_{BV}$. Passing this estimate to the limit as $\t\to 0$ we get $\rho\in L^\infty([0,T];BV(\Om))$.

Then we estimate the behavior of the interpolation curve $\hat\rho^\tau$ in terms of $W_1$. We estimate
\begin{align*}
W_1(\rho_k^\t,\tilde\rho_{k+1}^\t)\le \int_{k\t}^{(k+1)\t}|(\tilde\rho_t^\t)'|_{W_1}\dd t
&\le\int_{k\t}^{(k+1)\t}\int_\Om\left(\frac{|\nabla\tilde\rho^\t_{t}|}{\tilde\rho^\t_{t}}+|u_t|\right)\tilde\rho^\t_{t}\dd x\dd t\\
&\le \int_{k\t}^{(k+1)\t}\|\tilde\rho^\t_{t}\|_{BV}\dd t+C\t\le C\t.\end{align*}

Hence, we obtain
$$
W_1(\rho_k^\t,\rho_{k+1}^\t)\le W_1(\rho_k^\t,\tilde\rho_{k+1}^\t)+W_1(\tilde\rho_{k+1}^\t,\rho_{k+1}^\t)\le \t C.
$$

This in particular means, for $b>a$,
$$W_1(\hat\rho^\t(a),\hat\rho^\t(b))\leq C(b-a+\t).$$
We can pass this relation to the limit, using that, for every $t$, we have $\hat\rho^\t_t\to \rho_t$ in $\mathcal W_2(\Om)$ (and hence also in $\mathcal W_1(\Om)$, since $W_1\leq W_2$), we get 
$$W_1(\rho(a),\rho(b))\leq C(b-a),$$
which means that $\rho$ is Lipschitz continuous in $\mathcal W_1(\Om)$.
\end{proof}

\section{Variations on a theme: some reformulations of the {\it\textbf{Main scheme}}}\label{sec:5}

In this section we propose some alternative approaches to study the problem  \eqref{fokker2}. The general idea is to discretize in time, and give a way to produce a measure $\rho^\t_{k+1}$ starting from $\rho^\t_k$. Observe that the interpolations that we proposed in the previous sections $\rho^\t, \tilde\rho^\t$ and $\hat\rho^\t$ are only technical tools to state and prove a convergence result, and the most important point is exactly the definition of $\rho^\t_{k+1}$.

The alternative approaches proposed here explore different ideas, more difficult to implement than the one that we presented in Section \ref{sec:main}, and/or restricted to some particular cases (for instance when $u$ is a gradient). They have their own modeling interest and this is the main reason justifying their sketchy presentation.

\subsection{Variant 1: transport, diffusion then projection.} 
We recall that the original splitting approach for the equation without diffusion (\cite{MauRouSan1,aude_phd}) exhibited an important difference compared to what we did in Section \ref{sec:main}. Indeed, in the first phase of each time step (i.e. before the projection) the particles follow the vector field $u$ and $\tilde\rho^\t_{k+1}$ was not defined as the solution of a continuity equation with advection velocity given by $u_t$, but as the image of $\rho^\t_k$ via a straight-line transport: $\tilde\rho^\t_{k+1}:=(\id+\t u_{k\t})_\#\rho^\t_k$. One can wonder whether it is possible to follow a similar approach here.

A possible way to proceed is the following: take a random variable $X$ distributed according to $\rho^\t_k$, and define $\tilde\rho^\t_{k+1}$ as the law of $X+\t u_{k\t}(X)+B_\t$, where $B$ is a Brownian motion, independent of $X$. This exactly means that every particle moves starting from its initial position $X$, following a displacement ruled by $u$, but adding a stochastic effect in the form of the value at time $\t$ of a Brownian motion. We can check that this means
$$\tilde{\rho}_{k+1}^\t:=\eta_\tau*\left((\id+\t u_{k\t})_\#\rho^\t_k\right),$$
where $\eta_\tau$ is a Gaussian kernel with zero-mean and variance $\t$, i.e. $\ds\eta_\tau(x):=\frac{1}{(4\tau\pi)^{d/2}}e^{-\frac{|x|^2}{4\tau}}.$ 

Then we define 
$$\rho_{k+1}^\t:=P_\cK\left[\tilde{\rho}_{k+1}\right].$$

Despite the fact that this scheme is very natural and essentially not that different from the {\it Main scheme}, we have to be careful with the analysis. First we have to quantify somehow the distance $W_p(\rho_k^\t,\tilde\rho_{k+1}^\t)$ for some $p\ge1$ and show that this is of order $\t$ in some sense. Second, we need to be careful when performing the convolution with the heat kernel (or adding the Brownian motion, which is the same): this requires either to work in the whole space (which was not our framework) or in a periodic setting ($\Om=\T^d$, the flat torus, which is qutie restrictive). Otherwise, the ``explicit'' convolution step should be replaced with some other construction, such as following the Heat equation (with Neumann boundary conditions) for a time $\tau$. But this brings back to a situation very similar to the {\it Main scheme}, with the additional difficulty that we do not really have estimates on $(\id+\t u_{k\t})_\#\rho^\t_k$.

\subsection{Variant 2: gradient flow techniques for gradient velocity fields}

In this section we assume that the velocity field of the population is given by the opposite of the gradient of a function, $u_t=-\nabla V_t$ a typical example is given when we take for $V$ the distance function to the exit (see the discussions in \cite{MauRouSan2} about this type of question). We start from the case where $V$ does not depend on time, and we suppose $V\in W^{1,1}(\Om)$. In this particular case -- beside the splitting approach -- the problem has a variational structure, hence it is possible to show the existence by the means of gradient flows in Wasserstein spaces.

Since the celebrated paper of Jordan, Kinderlehrer and Otto (\cite{jko}) we know that the solutions of the Fokker-Planck equation (with a gradient vector field) can be obtained with the help of the gradient flow of a perturbed entropy functional with respect to the Wasserstein distance $W_2.$ This formulation of the JKO scheme was also used in \cite{MauRouSan2} for the first order model with density constraints. It is easy to combine the JKO scheme with density constraints to study the second order/diffusive model. As a slight modification of the model from \cite{MauRouSan2}, we can consider the following discrete implicit Euler (or JKO) scheme. As usual, we fix a time step $\tau>0,$ $\rho_0^\tau=\rho_0$ and for all $k\in\{1,2,\dots,\lfloor N/\tau\rfloor\}$ we just need to define $\rho_{k+1}^\tau$. We take
\begin{equation}\ds
\rho_{k+1}^\tau=\argmin_{\rho\in\P(\O)}\left\{\int_\O V(x)\rho(x)\dd x+\cE(\rho)+I_\cK(\rho)+\frac{1}{2\tau}W_2^2(\rho,\rho_k^\tau)\right\},
\end{equation}
where $I_\cK$ is the indicator function of $\cK,$ which is
$$I_\cK(x):=\left\{\begin{array}{ll}
0, & \rm{if}\ x\in \cK,\\
+\infty, & \rm{otherwise}.
\end{array}
\right.$$
The usual techniques from \cite{jko,MauRouSan2} can be used to identify that System \eqref{fokker2} is the gradient flow of the functional  $\ds\rho\mapsto J(\rho):=\int_\O V(x)\rho(x)\dd x+\cE(\rho)+I_\cK(\rho)$ and that the above discrete scheme converges (up to a subsequence) to a solution of \eqref{fokker2}, thus proving existence. The key estimate for compactness is
$$\frac{1}{2\tau}W_2^2(\rho^\t_{k+1},\rho_k^\tau)\leq J(\rho^\t_k)-J(\rho^\t_{k+1}),$$
which can be summed up (as on the r.h.s. we have a telescopic series), thus obtaining the same bounds on $\cB_2$ that we used in Section \ref{sec:main}.

Note that whenever $D^2V\geq \lambda I$, the functional $\rho\mapsto \int_\O V(x)\rho(x)\dd x+\cE(\rho)+I_\cK(\rho)$ is $\lambda$-geodesically convex. This allows to use the theory in \cite{ags} to prove not only existence, but also uniqueness for this equation, and even stability (contractivity or exponential growth on the distance between two solutions) in $\mathcal W_2$. Yet, we underline that the techniques of \cite{DiMMes} also give the same result. Indeed, \cite{DiMMes} contains two parts. In the first part, the equation with density constaints for a given velocity field $u$ is studied, under the assumption that $-u$ has some monotonicity properties: $(-u_t(x)+u_t(y))\cdot(x-y)\geq \lambda|x-y|^2$ (which is the case for the gradients of $\lambda$-convex functions). In this case standard Gr\"onwall estimates on the $W_2$ distance between two solutions are proved, and it is not difficult to add diffusion to that result (as the Heat kernel is already contractant in $\mathcal W_2$). In the second part, via different techniques (mainly using the adjoint equation, and proving somehow $L^1$ contractivity), the uniqueness result is provided for arbitrary $L^\infty$ vector fields $u$, but with the crucial help of the diffusion term in the equation.

It is also possible to study a variant where $V$ depends on time. We assume for simplicity that $V\in\Lip([0,T]\times \Om)$ (this is a simplification; less regularity in space, such as $W^{1,1}$, could be sufficient). In this case we define
 $$J_t(\rho):=\int_\O V_t(x)\rho(x)\dd x+\cE(\rho)+I_\cK(\rho)$$
 and 
\begin{equation}\ds
\rho_{k+1}^\tau=\argmin_{\rho\in\P(\O)}\left\{J_{k\t}(\rho)+\frac{1}{2\tau}W_2^2(\rho,\rho_k^\tau)\right\},
\end{equation}
The analysis proceeds similarly, with the only exception that the we get
$$\frac{1}{2\tau}W_2^2(\rho^\t_{k+1},\rho_k^\tau)\leq J_{k\t}(\rho^\t_k)-J_{k\t}(\rho^\t_{k+1}),$$
which is no more a a telescopic series. Yet, we have $J_{k\t}(\rho^\t_{k+1})\geq J_{(k+1)\t}(\rho^\t_{k+1})+\Lip(V)\t$, and we can go on with a telescopic sum plus a remainder of the order of $\t$. In the case where $u_t$ is the opposite of the gradient of a $\lambda$-convex function $V_t$, one could consider approximation by functions which are piecewise constant in time and use the standard theory of gradient flows.
%
%
%
%
%
%

Let us remark here that the recent paper \cite{AleKimYao} gives another approach to deal with first order crowd motion models as limit of nonlinear-diffusion equations with gradient drift. This approach could be plausible also in the case when we add a simple diffusion term in the models studied in \cite{AleKimYao}.

\subsection{ Variant 3: transport then gradient flow-like step with the penalized entropy functional.} We present now a different scheme, which combines some of the previous approaches. It could formally provide a solution of the same equation, but presents some extra difficulties. 

We define now $\tilde{\rho}_{k+1}^\tau:=(\id+\tau u_{k\tau})_\#\rho_k^\tau$ and with the help of this we define
$$\rho_{k+1}^\tau:=\argmin_{\rho\in\cK}\cE(\rho)+\frac{1}{2\tau}W_2^2(\rho,\tilde{\rho}_{k+1}^\tau).$$
In the last optimization problem we minimize a strictly convex and l.s.c. functionals, and hence we have existence and uniqueness of the solution. The formal reason for this scheme being adapted to the equation is that we perform a step of a JKO scheme in the spirit of \cite{jko} (without the density constraint) or of \cite{MauRouSan2} (without the entropy term). This should let a term $-\Delta\rho-\nabla\cdot(\rho\nabla p)$ appear in the evolution equation. The term $\nabla\cdot(\rho u)$ is due to the first step (the definition of $\tilde\rho^\t_{k+1}$). To explain a little bit more for the unexperienced reader, we consider the optimality conditions for the above minimization problem. Following \cite{MauRouSan2}, we can say that $\rho\in \cK$ is optimal if and only if there exists a constant $\ell\in\R$ and a Kantorovich potential $\varphi$ for the transport from $\rho$ to $\rho_k^\tau$ such that 

\noindent\begin{minipage}{8cm}
$$\rho=\begin{cases}1 & \mbox{ on }\left(\ln\rho+\frac\varphi\tau\right) <\ell,\\
					0 & \mbox{ on }\left(\ln\rho+\frac\varphi\tau\right) >\ell,\\
					\in[0,1] & \mbox{ on }\left(\ln\rho+ \frac\varphi\tau\right) = \ell.
\end{cases}$$
We then define $p=(\ell-\ln\rho-\frac\varphi\tau)_+$ and we get $p\in\press(\rho)$. Moreover, 
$\rho-\mbox{a.e.}\,\nabla p = -\frac{\nabla\rho}{\rho}-\frac{\nabla\varphi}{\tau}.$ We then use the fact that the optimal transport is of the form $T=\id-\nabla\varphi$ and obtain a situation as is sketched in Figure \ref{cxc2}.
\end{minipage}
\begin{minipage}{8cm}
\begin{center}
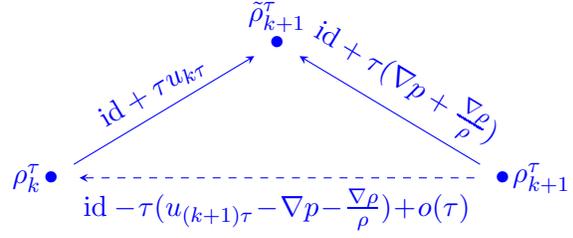

\begin{tikzpicture}[scale=0.6]
\draw[blue] (0,0) node {$\bullet$} node [left]{$\rho^\tau_k$};
\draw[blue,->,>=stealth] (0.5,0.3) -- (4.5,2.7) node[midway,above,sloped] {$\id+\tau u_{k\tau}$};
\draw[blue] (5,3) node {$\bullet$} node [above]{$\tilde\rho^\tau_{k+1}$};
\draw[blue,->,>=stealth] (9.5,0.3) -- (5.5,2.7) node[midway,above,sloped] {$\id+\tau (\nabla p+\frac{\nabla\rho}{\rho})$};
\draw[blue] (10,0) node {$\bullet$} node [right]{$\rho^\tau_{k+1}$};
\draw[blue,dashed,->,>=stealth] (9.375,0) -- (0.625,0) node[midway,below] {$\id-\!\tau (u_{(k+1)\tau}\!-\!\nabla p\!-\!\frac{\nabla\rho}{\rho})\!+\!o(\tau)$};
\end{tikzpicture}
\captionof{figure}{One time step}\label{cxc2}
\end{center}
\end{minipage}

Notice that $(\id+\tau u_{k\tau})^{-1}\circ(\id+\tau (\nabla p+\nabla\rho/\rho))=\id-\tau (u_{(k+1)\tau}-\nabla p-\nabla\rho/\rho)+o(\tau)$ provided $u$ is regular enough. Formally we can pass to the limit $\tau\to 0$ and have 
$$\partial_t\rho-\Delta\rho+\nabla\cdot(\rho(u-\nabla p))=0.$$
Yet, this turns out to be quite na\"ive, because we cannot get proper estimates on $W_2(\rho_k^\t,\rho_{k+1}^\t)$. Indeed,  this is mainly due to the hybrid nature of the scheme, i.e. a gradient flow for the diffusion and the projection part on one hand and a free transport on the other hand. The typical estimate in the JKO scheme comes from the fact that one can bound $W_2(\rho_k^\t,\rho_{k+1}^\t)^2/\t$ with the opposite of the increment of the energy, and that this gives rise to a telescopic sum. Yet, this is not the case whenever the base point for a new time step is not equal to the previous minimizer.  Moreover, the main difficulty here is the fact that the energy we consider implicitly takes the value $+\infty$, due to the constraint $\rho\in\cK$, and hence no estimate is possible whenever $ \tilde\rho^\tau_{k+1}\notin \cK$. As a possible way to overcome this difficulty, one could approximate the discontinuous functional $I_\cK$ with some finite energies of the same nature (for instance power-like entropies, even if the best choice would be an energy which is Lipschitz for the distance $W_2$). These kinds of difficulties are matter of current study, in particular for mixed systems and/or multiple populations.

\appendix
\section{$BV$-type estimates for the Fokker-Planck equation}\label{sec:app}

Here we present some Total Variation ($TV$) decay results (in time) for the solutions of the Fokker-Planck equation. Some are very easy, some trickier.  The goal is to look at those estimates which can be easily iterated in time and combined with the decay of the $TV$ via the projection operator, as we did in Section \ref{sec:bv}.

Let us take a vector field  $v:[0,+\infty[\times\Om\to\R^d$ (we will choose later which regularity we need) and consider in $\Om$ the problem 
\begin{equation}\label{FP-app}
\left\{
\begin{array}{ll}
\partial_t\rho_t -\Delta\rho_t+\nabla\cdot(\rho_tv_t)=0, & {\rm{in}}\ ]0,+\infty[\times\Om,\\
\rho_t(\nabla\rho_t-v_t)\cdot n=0, & {\rm{on}}\ [0,+\infty[\times\partial\Om,\\
\rho(0,\cdot)=\rho_0, & {\rm{in}}\ \Om,\\
\end{array}
\right.
\end{equation}
for $\rho_0\in BV(\Om)\cap\cP(\Om).$

\begin{lemma}\label{bv_estimate1}
Suppose $\|v_t\|_{C^{1,1}}\leq C$ for all $t\in[0,+\infty[.$ Suppose that either $\Om=\T^d$, or that $\Omega$ is convex and $v\cdot n=0$ on $\partial\Om$. Then, we have the following total variation decay estimate 
\begin{equation}\label{tv-estim}
\int_\Om |\nabla\rho_t|\dd x\le C(t-s)+ e^{C(t-s)}\int_\Om |\nabla\rho_s|\dd x,\;\;\;\forall\ 0\le s\le t,
\end{equation}
where $C>0$ is a constant depending just on the $C^{1,1}$ norm of $v$.\end{lemma}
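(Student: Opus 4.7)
The plan is to derive the differential inequality
\begin{equation*}
\frac{\dd}{\dd t}\int_\Om |\nabla\rho_t|\dd x \;\leq\; C\int_\Om|\nabla\rho_t|\dd x + C,
\end{equation*}
with $C$ depending only on $\|v\|_{C^{1,1}}$, and then conclude via Gronwall (using $\int\rho\,\dd x=1$). By parabolic regularity (available since $v\in C^{1,1}$), combined with a standard approximation in which $v$ and $\rho_0$ are first taken smoother and the limit is passed at the end, I may assume $\rho_t$ is smooth and strictly positive for $t>0$, so that the formal computations below are rigorous.

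Taking $\partial_i$ of \eqref{FP-app}, the field $w:=\nabla\rho$ satisfies
\begin{equation*}
\partial_t w_i - \Delta w_i + v_j\partial_j w_i + (\nabla\cdot v)\,w_i + (\partial_i v_j)\,w_j + \rho\,\partial_i(\nabla\cdot v) = 0.
\end{equation*}
I would multiply this equation by $\partial_{z_i}\phi_\e(w) = w_i/\sqrt{|w|^2+\e^2}$, with $\phi_\e(z):=\sqrt{|z|^2+\e^2}$ the usual convex regularization of $|z|$, integrate over $\Om$, and finally send $\e\to 0$. The Laplacian term, after integration by parts, produces a non-positive interior contribution (by convexity of $\phi_\e$) plus a boundary term $\int_{\partial\Om}\frac{w_i\,\partial_n w_i}{\sqrt{|w|^2+\e^2}}\dd\cH^{d-1}$. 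On the torus this boundary term vanishes; on a convex domain it is $\leq 0$, because the no-flux condition together with $v\cdot n=0$ forces $\partial_n\rho=0$ on $\partial\Om$, so $\nabla\rho$ is tangent to $\partial\Om$; differentiating $n\cdot\nabla\rho=0$ in a tangent direction and using convexity of $\Om$ then gives $\partial_n|\nabla\rho|^2 = -2\Pi(\nabla\rho,\nabla\rho)\leq 0$, where $\Pi$ denotes the second fundamental form of $\partial\Om$ (this is the classical $BV$-contractivity mechanism for the Neumann heat semigroup on convex domains).

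For the first-order terms, an integration by parts using $v\cdot n = 0$ yields
\begin{equation*}
-\int_\Om \frac{w_i\,v_j\partial_j w_i}{\sqrt{|w|^2+\e^2}}\,\dd x \;=\; \int_\Om (\nabla\cdot v)\,\phi_\e(w)\,\dd x,
\end{equation*}
which in the limit $\e\to 0$ cancels exactly against the contribution of the $(\nabla\cdot v)\,w_i$ term. The shear term $(\partial_i v_j)w_j$ is bounded pointwise by $\|\nabla v\|_{L^\infty}|w|$ and contributes at most $C\int|\nabla\rho|\dd x$; the source term $\rho\,\nabla(\nabla\cdot v)$ is bounded by $\|D^2 v\|_{L^\infty}\rho$ and thus contributes at most $\|D^2 v\|_{L^\infty}$ since $\int\rho\,\dd x=1$. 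Collecting everything and passing $\e\to 0$ yields the desired differential inequality, and Gronwall's lemma produces \eqref{tv-estim}.

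The main obstacle is the Laplacian boundary term: it genuinely requires both the convexity of $\Om$ and the Neumann condition on $\rho$ (hence the assumption $v\cdot n=0$ in the statement), and the geometric computation involving $\Pi$ must be carried out carefully. The rest of the argument is bookkeeping, resting crucially on the cancellation between the transport term and the $(\nabla\cdot v)\,w$ term, and on the fact that $\|v\|_{C^{1,1}}$ simultaneously controls $\|\nabla v\|_{L^\infty}$ (the Gronwall constant) and $\|D^2 v\|_{L^\infty}$ (the inhomogeneous forcing).
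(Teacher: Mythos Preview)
Your proposal is correct and follows essentially the same route as the paper: compute the time derivative of $\int_\Om \phi_\e(\nabla\rho)$ (the paper works formally with $\phi_0=|\cdot|$), discard the non-positive interior Laplacian contribution by convexity, handle the boundary term via convexity of $\Om$ together with $\partial_n\rho=0$ (the paper packages this into a separate lemma writing $\Om=\{h<0\}$ and using $D^2h\geq 0$, which is exactly your second-fundamental-form argument), and bound the remaining first-order terms by $C\int|\nabla\rho|+C$ to feed Gr\"onwall. One minor remark: the cancellation you highlight between the transport term and the $(\nabla\cdot v)w$ term is correct but not actually ``crucial''---the paper simply bounds each of these two contributions separately by $\|\nabla\cdot v\|_{L^\infty}\int|\nabla\rho|$, which still yields the desired differential inequality with a slightly larger constant.
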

\begin{proof}
First we remark that by the regularity of $v$ the quantity $\|v\|_{L^\infty}+\|D v\|_{L^\infty}+\|\nabla(\nabla\cdot v)\|_{L^\infty}$ is uniformly bounded. Let us drop now the dependence on $t$ in our notation and calculate in coordinates
\small
\begin{align*}
\frac{\dd}{\dd t}\int_\Om|\nabla\rho|\dd x&=\int_\Om\frac{\nabla\rho}{|\nabla\rho|}\cdot\nabla(\partial_t\rho)\dd x=\int_\Om\frac{\nabla\rho}{|\nabla\rho|}\cdot\nabla(\Delta\rho-\nabla\cdot(v\rho))\dd x=\int_\Om\sum_j\frac{\rho_j}{|\nabla\rho|}\left(\sum_i\rho_{iij}-(\nabla\cdot (v\rho))_j\right)\dd x\\ \normalsize
&=-\int_\Om\sum_{i,j,k}\left(\frac{\rho_{ij}^2}{|\nabla\rho|}-\frac{\rho_j\rho_k\rho_{ki}\rho_{ij}}{|\nabla\rho|^3}\right)\dd x+B_1-\int_\Om\sum_{j,i}\frac{\rho_j}{|\nabla\rho|}\left(v_{ij}^i\rho+v_{i}^i\rho_j+v^i_j\rho_i+v^i\rho_{ij}\right)\dd x\\
&\le B_1 + C + C\int_\Om|\nabla\rho|\dd x +\int_\Om|\nabla\rho||\nabla\cdot v|\dd x + B_2\\
&\le B_1 + B_2 + C + C\int_\Om|\nabla\rho|\dd x.
\end{align*}\normalsize
Here the $B_i$ are the boundary terms, i.e. 
$$\ds B_1:=\int_{\partial\Om}\sum_{i,j}\frac{\rho_j n^i\rho_{ij}}{|\nabla\rho|}\dd\cH^{d-1}\;\mbox{ and }\;\ds B_2:=-\int_{\partial\Om}(v\cdot n)|\nabla\rho|\dd\cH^{d-1}.$$ 
The constant $C>0$ only depends on $\|v\|_{L^\infty}+\|\nabla\cdot v\|_{L^\infty}+\|\nabla(\nabla\cdot v)\|_{L^\infty}.$ We used as well the fact that $\ds -\int_\Om\sum_{i,j,k}\left(\frac{\rho_{ij}^2}{|\nabla\rho|}-\frac{\rho_j\rho_k\rho_{ki}\rho_{ij}}{|\nabla\rho|^3}\right)\dd x\le 0.$

Now, it is clear that in the case of the torus the boundary terms $B_1$ and $B_2$ do not exist, hence we conclude by Gr\"onwall's lemma. In the case of the convex domain we have $B_2=0 $ (because of the assumption $v\cdot n=0$) and $B_1\leq 0$ because of the next Lemma \ref{lemma exball}.
\end{proof}

\begin{lemma}\label{lemma exball}
Suppose that $u:\Omega\to\R^d$ is a smooth vector field with $u\cdot n=0$ on $\partial\Omega$, $\rho$ is a smooth function with $\nabla\rho\cdot n=0$ on  $\partial\Omega$, and that $\Omega\subset\R^d$ is a smooth convex set that we write as $\Omega=\{h<0\}$ for a smooth convex function $h$ with $|\nabla h|=1$ on $\partial\Omega$ (so that $n=\nabla h$ on $\partial\Omega$). Then we have, on the whole boundary $\partial\Omega$,
$\ds\sum_{i,j}u^i_j \rho_j n^i=-\sum_{i,j}u^i h_{ij}\rho_j .$ 

In particular, we have $\ds\sum_{i,j}\rho_{ij}\rho_j n^i\leq 0$.\end{lemma}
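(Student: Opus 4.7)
The plan is to linearize the boundary condition $u\cdot n=0$ by differentiating it tangentially along the direction $\nabla\rho$ (which is tangential thanks to the hypothesis $\nabla\rho\cdot n=0$), and then to specialize to $u=\nabla\rho$ to deduce the sign statement from convexity of $h$.

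Since $n=\nabla h$ on $\partial\Om$, the assumption $u\cdot n=0$ reads $F:=\sum_i u^i h_i\equiv 0$ along $\partial\Om$. A smooth function vanishing on a smooth hypersurface has, at each boundary point, gradient parallel to the unit normal, so $T\cdot\nabla F=0$ for every vector $T$ tangent to $\partial\Om$. Taking $T=\nabla\rho$, which is tangential by the hypothesis on $\rho$, and expanding $\partial_j F$ by the product rule, I obtain on $\partial\Om$
\begin{equation*}
0=\sum_{j}\rho_j\,\partial_j F=\sum_{i,j}\rho_j\bigl(u^i_j h_i+u^i h_{ij}\bigr).
\end{equation*}
Substituting $h_i=n^i$ on $\partial\Om$ and isolating the first sum yields exactly the claimed identity $\sum_{i,j}u^i_j\rho_j n^i=-\sum_{i,j}u^i h_{ij}\rho_j$.

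For the ``in particular'' assertion I apply this identity with $u=\nabla\rho$: this choice does satisfy the hypothesis $u\cdot n=\nabla\rho\cdot n=0$ on $\partial\Om$, and $u^i_j=\rho_{ij}$. The identity then becomes
\begin{equation*}
\sum_{i,j}\rho_{ij}\,\rho_j\,n^i=-\sum_{i,j}\rho_i\,h_{ij}\,\rho_j=-\langle D^2 h\,\nabla\rho,\,\nabla\rho\rangle,
\end{equation*}
and convexity of $\Om$, equivalent to $D^2 h\ge 0$ as a symmetric matrix, forces the right-hand side to be nonpositive.

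The only delicate point is ensuring the tangential differentiation is legitimate; without the Neumann-type condition $\nabla\rho\cdot n=0$ one would pick up a normal contribution to $\nabla F$ which would not cancel, and the clean identity would be spoiled. The convexity of $\Om$ is used only at the very last step, to convert the identity into a sign.
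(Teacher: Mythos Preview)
Your proof is correct and follows essentially the same approach as the paper's: both differentiate the identity $u\cdot\nabla h=0$ tangentially along $\partial\Omega$ in the direction $\nabla\rho$, and then specialize to $u=\nabla\rho$ using $D^2h\ge 0$. The paper phrases the tangential differentiation via a curve $\gamma\subset\partial\Omega$ with $\gamma'(t_0)=\nabla\rho(x_0)$, whereas you invoke the equivalent fact that the full gradient of $F=u\cdot\nabla h$ is normal to the hypersurface on which $F$ vanishes; the resulting computation and conclusion are identical.
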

\begin{proof} 

The Neumann boundary assumption on $u$ means $u(\gamma(t))\cdot \nabla h(\gamma(t))=0$ for every curve $\gamma$ valued in $\partial\Omega$ and for all $t$. Differentiating in $t$, we get
$$\sum_{i,j}u^i_j(\gamma(t))(\gamma'(t))^j h_i(\gamma(t))+\sum_{i,j}u^i(\gamma(t))h_{ij}(\gamma(t))(\gamma'(t))^j=0.$$
Take a point $x_0\in\partial\Omega$ and choose a curve $\gamma$ with $\gamma(t_0)=x_0$ and $\gamma'(t_0)=\nabla \rho(x_0)$ (which is possible, since this vector is tangent to $\partial\Omega$ by assumption). This gives the first part of the statement. The second part, i.e. $\ds\sum_{i,j}\rho_{ij}\rho_j n^i\leq 0$, is obtained by taking $u=\nabla \rho$ and using that $D^2h(x_0)$ is a positive definite matrix.
\end{proof}

%
%
%
%
\begin{remark}
If we look attentively at the proof of Lemma \ref{bv_estimate1}, we can see that we did not really exploit the regularizing effects of the diffusion term in the equation. This means that the regularity estimate that we provide are the same that we would have without diffusion: in this case, the density $\rho_t$ is obtained from the initial density as the image through the flow of $v$. Thus, the density depends on the  determinant of the Jacobian of the flow, hence on the derivatives of $v$. It is normal that, if we want $BV$ bounds on $\rho_t$, we need assumptions on two derivatives of $v$.
\end{remark}
We would like to prove some form of $BV$ estimates under weaker regularity assumptions on $v$, trying to exploit the diffusion effects. In particular, we would like to treat the case where $v$ is only $C^{0,1}$. As we will see in the following lemma, this degenerates in some sense. 

\begin{lemma}\label{bv_estimate2}
Suppose that $\Omega$ is either the torus or a smooth convex set $\Omega=\{h<0\}$ parameterized as a level set of a smooth convex function $h$.
Let $v_t:\Omega\to\R^d$ be a vector field for $t\in[0,T]$, Lipschitz and bounded in space, uniformly in time. In the case of a convex domain, suppose $v\cdot n=0$ on $\partial\Omega$. Let $H:\R^d\to\R$ be given by $H(z):=\sqrt{\e^2+|z|^2.}$ 
Now let $\rho_t$  (sufficiently smooth) be the solution of the Fokker-Planck equation with homogeneous Neumann boundary condition.

Then there exists a constant $C>0$ (depending on $v$ and $\Omega$) such that 
\begin{equation}
\int_\Omega H(\nabla\rho_t)\dd x\le \int_\Omega H(\nabla\rho_0)\dd x +C\e t +\frac C\e \int_0^t \|\rho_s\|_{L^\infty}^2\dd s.
\end{equation}
\end{lemma}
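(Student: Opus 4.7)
The plan is to differentiate $\Phi(t):=\int_\Omega H(\nabla\rho_t)\,dx$ directly in time, under the standing smoothness hypothesis on $\rho$ (to be relaxed by a routine approximation argument at the end). Using the chain rule and integrating by parts (legitimate thanks to the Neumann boundary condition on $\rho$, which gives $H'(\nabla\rho)\cdot n=0$ on $\partial\Omega$), one gets $\Phi'(t)=-\int_\Omega \nabla\cdot H'(\nabla\rho)\,\partial_t\rho\,dx$. Substituting the Fokker--Planck equation $\partial_t\rho=\Delta\rho-\nabla\cdot(v\rho)$ and integrating by parts once more in the Laplacian term produces
\[
\Phi'(t)\le -D(t)+\int_\Omega \nabla\cdot H'(\nabla\rho)\,\nabla\cdot(v\rho)\,dx,
\]
where $D(t):=\int_\Omega \mathrm{tr}\bigl(D^2H(\nabla\rho)\,(D^2\rho)^2\bigr)\,dx\ge 0$. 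The positive semi-definiteness of $D^2H$ is crucial, and its eigenvalues ($1/H$ with multiplicity $d-1$ and $\e^2/H^3$) give the pointwise bound $\mathrm{tr}(D^2H)\le d/\e$. The boundary term coming from the diffusion IBP is non-positive by Lemma~\ref{lemma exball} applied to $u=\nabla\rho$ in the convex case, and it vanishes on the torus.

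Splitting $\nabla\cdot(v\rho)=\rho\,\nabla\cdot v+v\cdot\nabla\rho$ yields two transport subpieces. For $T_A=\int_\Omega\rho\,(\nabla\cdot v)\,\bigl(\nabla\cdot H'(\nabla\rho)\bigr)\,dx$, one uses the matrix Cauchy--Schwarz inequality $|D^2H:D^2\rho|^2\le \mathrm{tr}(D^2H)\,\mathrm{tr}(D^2H(D^2\rho)^2)$ (valid whenever $D^2H$ is PSD and $D^2\rho$ is symmetric), followed by an integral Cauchy--Schwarz and Young's inequality, yielding $|T_A|\le \tfrac14 D(t)+\tfrac{C}{\e}\|\rho_t\|_\infty^2$. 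For $T_B=\int_\Omega(v\cdot\nabla\rho)\,\bigl(\nabla\cdot H'(\nabla\rho)\bigr)\,dx$, integrating by parts once (using $H'\cdot n=v\cdot n=0$) gives $T_B=-\int H_j\,\partial_j v^i\,\rho_i\,dx+\int(\nabla\cdot v)\,H(\nabla\rho)\,dx$, and the pointwise bound $|H'(\nabla\rho)\cdot\nabla\rho|=|\nabla\rho|^2/H\le H$ then gives $|T_B|\le C L\,\Phi(t)+CL\e$, where $L=\|Dv\|_\infty$.

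The key step converting an a priori exponential-in-time into a linear-in-time growth is the following absorption. Starting from the algebraic identity $H(\nabla\rho)=H'(\nabla\rho)\cdot\nabla\rho+\e^2/H$ and integrating by parts once more (again with vanishing boundary because $H'\cdot n=0$),
\[
\Phi(t)=\int_\Omega \nabla\rho\cdot H'(\nabla\rho)\,dx+\int_\Omega\frac{\e^2}{H}\,dx=-\int_\Omega \rho\,\nabla\cdot H'(\nabla\rho)\,dx+\int_\Omega\frac{\e^2}{H}\,dx,
\]
so that estimating this in the same way as $T_A$ produces $\Phi(t)\le \|\rho_t\|_\infty\sqrt{d|\Omega|/\e}\,\sqrt{D(t)}+\e|\Omega|$. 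Young's inequality then transforms the $CL\,\Phi(t)$ contribution in the bound for $T_B$ into $\tfrac14 D(t)+\tfrac{C'}{\e}\|\rho_t\|_\infty^2+CL\e$, absorbing the dangerous piece into $D(t)$. We end up with the differential inequality
\[
\Phi'(t)\le -\tfrac12 D(t)+\frac{C}{\e}\|\rho_t\|_\infty^2+C\e,
\]
whose integration on $[0,t]$ yields the claim. The main obstacle is precisely the identification of this absorption: without the IBP representation of $\Phi(t)$ itself, the transport subterm $T_B$ contributes a $CL\,\Phi$ term that a naive Gr\"onwall argument would only handle with an $e^{CLt}$ prefactor in front of $\Phi(0)$, which would violate the coefficient $1$ in the stated estimate. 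The smoothness hypothesis on $\rho$ is relaxed at the end by a standard mollification of the initial datum and of $v$, passing to the limit by the lower semicontinuity of $\Phi$ and the convergence of the Fokker--Planck solutions.
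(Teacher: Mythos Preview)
Your argument is correct, but it follows a genuinely different route from the paper's.

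The paper differentiates $\int_\Omega H(\nabla\rho)$ without your preliminary integration by parts and splits the transport contribution as $(II_a)+(II_b)$, where $(II_a)=\int H_{jk}(\nabla\rho)\rho_{ki}v^i_j\rho$ and $(II_b)=\int H_{jk}(\nabla\rho)\rho_{ki}v^i\rho_j$. The term $(II_a)$ is absorbed into the dissipation exactly as your $T_A$ is. The essential difference is in the remaining piece: instead of bounding it by $CL\,\Phi(t)$ and then invoking your absorption identity for $\Phi$, the paper exploits the explicit algebraic fact $D^2H(z)\,z=\frac{\e^2}{H^3(z)}\,z$ (equivalently, the cancellation $(|z|^2I_d-z\otimes z)z=0$) to compute $(II_b)$ exactly as $-\e^2\int v\cdot\nabla(1/H) = \e^2\int (\nabla\cdot v)/H\le C\e$. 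No Gr\"onwall-type term ever appears, and the estimate follows in one step.

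Your absorption device --- writing $\Phi(t)=-\int_\Omega \rho\,\nabla\cdot H'(\nabla\rho)+\int_\Omega\e^2/H$ and then controlling $\Phi(t)$ by $\|\rho\|_\infty\sqrt{D(t)/\e}+C\e$ --- is an elegant alternative that sidesteps having to spot this identity. It also keeps the boundary analysis slightly lighter: you only invoke Lemma~\ref{lemma exball} with $u=\nabla\rho$, whereas the paper also uses it with $u=v$ (together with a Young inequality against $D^2h$) to treat the boundary contribution from $(II_a)$. On the other hand, once the pointwise identity for $(II_b)$ is seen, the paper's proof is shorter and more transparent about \emph{why} the dangerous term is in fact $O(\e)$. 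One minor expository point: your ``integrating by parts once more in the Laplacian term'' actually hides two integrations by parts (undoing the first and redoing it in the other index) to reach $-D(t)$; the inequality you state is correct, but it may be worth making this explicit.
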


\begin{proof}
First let us discuss about some properties of $H.$ It is smooth, its gradient is $\ds\nabla H(z)=\frac{z}{H(z)}$ and it satisfies $\nabla H(z)\cdot z\le H(z),\ \forall z\in\R^d.$ Moreover its Hessian matrix is given by
$$[H_{ij}(z)]_{i,j\in\{1,\dots,d\}}=\left[\frac{\d^{ij}H^2(z)-z^iz^j}{H^3(z)}\right]_{i,j\in\{1,\dots,d\}}=\frac{1}{H(z)}I_d-\frac{1}{H^3(z)}z\otimes z,\ \forall\ z\in\R^d,$$ 
where 
$\ds
\d^{ij}=\left\{
\begin{array}{ll}
1, & {\rm{if}}\ i=j,\\
0, & {\rm{if}}\ j\neq j,
\end{array}
\right.
$
is the Kronecker symbol. Note that, from this computation, the matrix $D^2H\geq 0$ is bounded from above by $\ds\frac 1H$, and hence by $\e^{-1}$. Moreover we introduce a uniform constant $C>0$ such that $\|v\|^2_{L^{\infty}}|\Omega|+\|\nabla\cdot v\|_{L^{\infty}}+\|Dv\|_{L^\infty}\le C.$

Now to show the estimate of this lemma we calculate the quantity $\ds\frac{\dd}{\dd t}\int H(\nabla\rho_t)\dd x.$
\begin{align*}
\frac{\dd}{\dd t}\int_\Om H(\nabla\rho_t)\dd x & = \int_\Om \nabla H(\nabla\rho_t)\cdot\partial_t\nabla\rho_t\dd x=\int_\Omega\nabla H(\nabla\rho_t)\cdot\nabla (\Delta\rho_t-\nabla\cdot(v_t\rho_t))\dd x\\
& = \int_\Omega\nabla H(\nabla\rho_t)\cdot\nabla\Delta\rho_t\dd x-\int_\Omega\nabla H(\nabla\rho_t)\cdot\nabla (\nabla\cdot (v_t\rho_t))\dd x\\
&=: (I) + (II)
\end{align*}
Now we study each term separately and for the simplicity we drop the $t$ subscripts in the followings.  We start from the case of the torus, where there is no boundary term in the integration by parts.
$$
\begin{array}{rcl}
\vspace{5pt}
(I) &=&\ds\int_\Omega\nabla H(\nabla\rho)\cdot\nabla\Delta\rho\dd x=\int_\Omega \sum_{j,i} H_j(\nabla\rho)\rho_{jii}\dd x=-\int_\Omega\sum_{j,i,k}H_{kj}(\nabla\rho)\rho_{ik}\rho_{ji}\dd x \vspace{20pt}\\

(II) & =&\ds -\int_\Omega\nabla H(\nabla\rho)\cdot\nabla (\nabla\cdot (v\rho))\dd x=-\int_\Omega\sum_{i,j}H_j(\nabla\rho)(v^i\rho)_{ij}\dd x\\ \vspace{5pt}
\ &= & \ds \int_\Omega\sum_{i,j,k}H_{jk}(\nabla\rho)\rho_{ki} v^i_j\rho\dd x +\int_\Omega\sum_{i,j,k}H_{jk}(\nabla\rho)\rho_{ki} v^i\rho_j\dd x\\ \vspace{5pt}
\ &=: & (II_a) + (II_b).
\end{array}
$$

First look at the term $(II_a)$. Since the matrix $H_{jk}$ is positive definite, we can apply a Young inequality for each index $i$ and obtain
\begin{eqnarray*}
(II_a)=\int_\Omega\sum_{i,j,k}H_{jk}(\nabla\rho)\rho_{ki} v^i_j\rho\dd x&\leq& \frac 12 \int_\Omega\sum_{i,j,k}H_{jk}(\nabla\rho)\rho_{ki} \rho_{ij}\dd x+\frac 12 \int_\Omega\sum_{i,j,k}H_{jk}(\nabla\rho)v^i_j v^i_k\rho^2\dd x\\&\leq& \frac 12 |(I)|+C\|\rho\|_{L^2}^2\|D^2H\|_{L^\infty}.
\end{eqnarray*}
The $L^2$ norm in the second term will be estimated by the $L^\infty$ norm for the sake of simplicity (see Remark \ref{rem torus L^2} below).

For the term $(II_b)$ we first make a point-wise computation
\begin{align*}
\sum_{i,j,k}H_{jk}(\nabla\rho)\rho_{ki} v^i\rho_j&=\frac{1}{H^3(\nabla\rho)}\sum_i[D^2_i\rho\cdot\left(\e^2I_d+|\nabla\rho|^2 I_d-\nabla\rho\otimes\nabla\rho\right)\cdot\nabla\rho]v^i\\
&=\frac{\e^2}{H^3(\nabla\rho)}\sum_iv^iD^2_i\rho\cdot\nabla\rho=-\e^2 \sum_i v^i \partial_i\left(\frac{1}{H(\nabla\rho)}\right).
\end{align*}
where $D^2_i\rho$ denotes the $i^{th}$ row in the Hessian matrix of $\rho$ and we used $\left(|\nabla\rho|^2 I_d-\nabla\rho\otimes\nabla\rho\right)\cdot\nabla\rho=0.$

Integrating by parts we obtain
$$(II_b)= \e^2\int_\Omega (\nabla\cdot v)\frac{1}{H(\nabla\rho)}\dd x\leq C\e^2\|1/H\|_{L^\infty}\leq C\e,$$
where we used $H(z)\geq \e$.

Summing up all the terms we get and using $\|D^2H\|\leq \e^{-1}$  we get
$$\frac{\dd}{\dd t}\int_\Om H(\nabla\rho_t)\dd x \leq -\frac12 |(I)|+C\|\rho_t\|_{L^\infty}^2\|D^2H\|_{L^\infty}+C\e\leq C\e+C\|\rho_t\|_{L^\infty}^2\e^{-1},$$
which proves the claim.

If we switch to the case of a smooth bounded convex domain $\Omega$, we have to handle boundary terms. These terms are
$$\int_{\partial\Omega} \sum_{i,j}H_j(\nabla\rho)\rho_{ij} n^i-\int_{\partial\Omega} \sum_{i,j}H_{j}(\nabla\rho)\rho v^i_{j} n^i,$$
where we ignored those terms involving $n^i v^i$ (i.e., the integration by parts in $(II_b)$, and the term $H_j(\nabla\rho)\rho_j n^i v^i$ in the integration by parts of $(II_a)$), since we already supposed $v\cdot n=0$. We use here Lemma \ref{lemma exball}, which provides 
$$\sum_{i,j}H_j(\nabla\rho)\rho_{ij} n^i-\rho H_j(\nabla\rho)v^i_j n^i=\frac{1}{H(\nabla\rho)}\sum_{i,j}\left(\rho_j\rho_{ij} n^i -\rho \rho_j v^i_j n^i\right)=-\frac{1}{H(\nabla\rho)}\sum_{i,j}\left(\rho_j h_{ij} \rho_i -\rho \rho_j h_{ij} v^i\right).$$
If we use the fact that the matrix $D^2h $ is positive definite and a Young inequality, we get $\sum_{i,j}\rho_j h_{ij} \rho_i \geq 0 $ and 
$$\rho\sum_{i,j}| \rho_j h_{ij} v^i|\leq \frac 12 \sum_{i,j}\rho_j h_{ij} \rho_i+\frac 12\sum_{i,j} \rho^2 v^j h_{ij} v^i,$$
which implies 
$$\frac{1}{H(\nabla\rho)}\sum_{i,j}\left(\rho_j\rho_{ij} n^i -\rho \rho_j v^i_j n^i\right)\leq \frac{\rho^2}{H(\nabla\rho)}\|D^2h\|_{L^\infty} |v|^2\leq \frac{C\|\rho\|_{L^\infty}^2}{\e}.$$
This provides the desired estimate on the boundary term.
\end{proof}

\begin{remark}\label{rem torus L^2} 
In the above proof, we needed to use the $L^\infty$ norm of $\rho$ only in the boundary term. When there is no boundary term, the $L^2$ norm is enough, in order to handle the term $(II_a)$. In both cases, the norm of $\rho$ can be bounded in terms of the initial norm multiplied by $e^{Ct}$, where $C$ bounds the divergence of $v$. On the other hand, in the torus case, one only needs to suppose $\rho_0\in L^2$ and in the convex case $\rho_0\in L^\infty$. Both assumptions are satisfied in the applications to crowd motion with density constraints.
\end{remark}

We have seen that the constants in the above inequality depend on $\e$ and explode as $\e\to 0$. This prevents us to obtain a clean estimate on the $BV$ norm in this context, but at least proves that $\rho_0\in BV\Rightarrow \rho_t\in BV$ for all $t>0$ (to achieve this result, we just need to take $\e=1$). Unfortunately, the quantity which is estimated is not the $BV$ norm, but the integral $\ds\int_\O H(\nabla\rho)$. This is not enough for the purpose of the applications to Section \ref{sec:bv}, as it is unfortunately not true that the projection operator decreases the value of this other functional\footnote{Here is a simple counter-example: consider $\mu=g(x)\dd x$ a $BV$ density on $[0,2]\subset\R$, with $g$ defined as follows. Divide the interval $[0,2]$ into $2K$ intervals $J_i$ of length $2r$ (with $2rK=1$); call $t_i$ the center of each interval $J_i$ (i.e. $t_i=i2r+r$, for $i=0,\dots,2K-1$) and set $g(x)=L+\sqrt{r^2-(x-t_i)^2}$ on each $J_i$ with $i$ odd, and $g(x)=0$ on $J_i$ for $i$ even, taking $L=1-\pi r/4$. It is not difficult to check that the projection of $\mu$ is equal to the indicator function of the union of all the intervals $J_i$ with $i$ odd, and that the value of $\int H(\nabla\rho)$ has increased by $K(2-\pi/2)r=1-\pi/4$, i.e. by a positive constant (see Figure \ref{doigts}).
}.

\begin{figure}[h]
\begin{minipage}{4cm}
\begin{tikzpicture}[scale=0.6,xmin=-0.6,xmax=4.8,ymin=-0.5,ymax=5]
\axes \fenetre
\draw (0.2,0)--(0.2,4) arc(180:0:0.3)--(0.8,0)--(1.4,0)--(1.4,4) arc(180:0:0.3)--(2,0)--(2.6,0)--(2.6,4) arc(180:0:0.3)--(3.2,0)--(3.8,0)--(3.8,4) arc(180:0:0.3)--(4.4,0);
\draw[dashed] (0,4)--(4.5,4);
\draw[dashed] (0,4.2)--(4.5,4.2);
\draw(-0.4,3.8) node{$L$};
\draw (-0.4,4.4) node{$1$};
\draw (4.6,4.6) node{$\mu$};
\end{tikzpicture}
\end{minipage}
\begin{minipage}{4.5cm}
\begin{tikzpicture}[scale=0.6,xmin=-0.6,xmax=5.3,ymin=-0.5,ymax=5]
\axes \fenetre
\draw (0.2,0)--(0.2,4.2) --(0.8,4.2)--(0.8,0)--(1.4,0)--(1.4,4.2) -- (2,4.2)--(2,0)--(2.6,0)--(2.6,4.2) -- (3.2,4.2)--(3.2,0)--(3.8,0)--(3.8,4.2)--(4.4,4.2)--(4.4,0);
\draw[dashed] (0,4)--(4.5,4);
\draw[dashed] (0,4.2)--(4.5,4.2);
\draw(-0.4,3.8) node{$L$};
\draw (-0.4,4.4) node{$1$};
\draw (4.6,4.6) node{$P_\cK[\mu]$};
\end{tikzpicture}
\end{minipage}
\caption{The counter-example to the decay of $\ds\int_\O H(\nabla\rho)$, which corresponds to the total legth of the graph}
\label{doigts}
\end{figure}
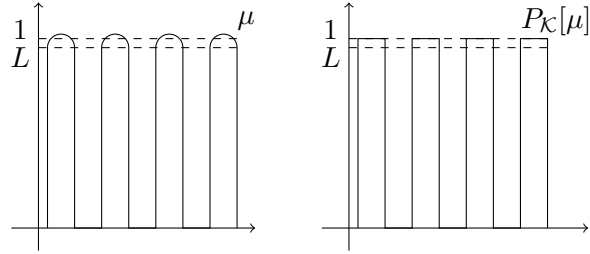

If we stay interested to the value of the $BV$ norm, we can provide the following estimate.

\begin{lemma}\label{bv_estimate sqrte}
Under the assumptions of Lemma \ref{bv_estimate2}, if we suppose $\rho_0\in BV(\Omega)\cap L^\infty(\Omega)$, then, for $t\leq T$, we have\begin{equation}
\int_\Omega |\nabla\rho_t|\dd x\le \int_\Omega |\nabla\rho_0|\dd x  +C\sqrt{t},
\end{equation}
where the constant $C$ depends on $v$, on $T$ and on $\|\rho_0\|_{L^\infty}$.
\end{lemma}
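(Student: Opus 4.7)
The plan is to bootstrap from Lemma \ref{bv_estimate2} by optimizing the parameter $\varepsilon$ and invoking the $L^\infty$-bound of Lemma \ref{tool3}. Recall that $H(z) = \sqrt{\varepsilon^2 + |z|^2}$ satisfies the pointwise inequalities
\[
|z| \;\le\; H(z) \;\le\; |z| + \varepsilon \qquad \forall z \in \R^d,
\]
so, integrating over $\Omega$, we get $\int_\Omega |\nabla\rho_t|\dd x \le \int_\Omega H(\nabla\rho_t)\dd x$ and $\int_\Omega H(\nabla\rho_0)\dd x \le \int_\Omega |\nabla\rho_0|\dd x + \varepsilon|\Omega|$.

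Next I would plug these two inequalities into the estimate provided by Lemma \ref{bv_estimate2}, which gives
\[
\int_\Omega |\nabla\rho_t|\dd x \;\le\; \int_\Omega |\nabla\rho_0|\dd x \;+\; \varepsilon|\Omega| \;+\; C\varepsilon t \;+\; \frac{C}{\varepsilon}\int_0^t \|\rho_s\|_{L^\infty}^2 \dd s.
\]
To handle the last term I would invoke Lemma \ref{tool3}, which under the hypotheses $v \in \mathrm{Lip}$ and $v\cdot n = 0$ yields $\|\rho_s\|_{L^\infty} \le \|\rho_0\|_{L^\infty} e^{Cs} \le \|\rho_0\|_{L^\infty} e^{CT}$ for all $s \in [0,T]$. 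Substituting this bound gives
\[
\int_\Omega |\nabla\rho_t|\dd x \;\le\; \int_\Omega |\nabla\rho_0|\dd x \;+\; \varepsilon\bigl(|\Omega| + Ct\bigr) \;+\; \frac{C\,t\,\|\rho_0\|_{L^\infty}^2 e^{2CT}}{\varepsilon}.
\]

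It remains to optimize in $\varepsilon > 0$. Choosing $\varepsilon = \sqrt{t}$ (which is essentially the minimizer up to constants) makes both $\varepsilon$-dependent contributions of order $\sqrt{t}$, and yields a bound of the form $C\sqrt{t}$ with $C$ depending only on $v$, $T$, $|\Omega|$, and $\|\rho_0\|_{L^\infty}$, as claimed.

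The main (minor) obstacle is the regularity issue: Lemma \ref{bv_estimate2} is stated for sufficiently smooth $\rho$, whereas here $\rho_0$ is only assumed $BV \cap L^\infty$. This is handled by a standard approximation argument: mollify $\rho_0$ to obtain smooth $\rho_0^k$ with $\int_\Omega |\nabla\rho_0^k|\dd x \to \int_\Omega |\nabla\rho_0|\dd x$ and $\|\rho_0^k\|_{L^\infty} \le \|\rho_0\|_{L^\infty}$, apply the inequality to the corresponding smooth solutions $\rho_t^k$, and pass to the limit using the continuous dependence of the Fokker-Planck flow on initial data together with the lower semicontinuity of the total variation.
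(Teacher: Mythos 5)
Your proof is correct and follows essentially the same route as the paper: bound $|\nabla\rho_t|$ by $H(\nabla\rho_t)$, apply Lemma \ref{bv_estimate2}, control $\|\rho_s\|_{L^\infty}$ via Lemma \ref{tool3}, use $H(\nabla\rho_0)\le |\nabla\rho_0|+\e$, and choose $\e=\sqrt{t}$. The closing approximation argument for non-smooth $\rho_0$ is a reasonable addition that the paper leaves implicit by assuming $\rho_t$ "sufficiently smooth".
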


\begin{proof}
Using the $L^\infty$ estimate of Lemma \ref{tool3}, we will assume that $\|\rho_t\|_{L^\infty}$ is bounded by a constant (which depends on 
$v$, on $T$ and on $\|\rho_0\|_{L^\infty}$). Then, we can write
$$\int_\Omega |\nabla\rho_t|\dd x\le \int_\Omega H(\nabla\rho_t)\dd x\leq  \int_\Omega H(\nabla\rho_0)\dd x+C\e t+\frac{Ct}{\e}\leq 
 \int_\Omega (|\nabla\rho_0|+\e)\dd x+C\e t+\frac{Ct}{\e}.$$
 It is sufficient to choose, for fixed $t$, $\e=\sqrt{t}$, in order to prove the claim.
\end{proof}

Unfortunately, this $\sqrt{t}$ behavior is not suitable to be iterated, and the above estimate is useless for the sake of Section \ref{sec:bv}.
The existence of an estimate (for $v$ Lipschitz) of the form $TV(\rho_t)\le TV(\rho_0)+Ct$, or $TV(\rho_t)\le TV(\rho_0)e^{Ct}$, or even $f(TV(\rho_t))\leq f(TV(\rho_0))e^{Ct}$, for any increasing function $f:R_+\to\R_+$, seems to be an open question. 
\bigskip

{\sc{Acknowledgements.}} The authors warmly acknowledge the support of the ANR project ISOTACE (ANR-12-MONU-0013) and of the iCODE project ``strategic crowds'', funded by the IDEX {\it Universit\'e Paris-Saclay}. They also acknowledge the warm hospitality of the {\it Fields Institute} of Toronto, where a large part of the work was accomplished during the Thematic Program on Variational Problems in Physics, Economics and Geometry in Fall 2014. Last but not least, the two anonymous referees who read very carefully the paper and suggested many useful improvements are warmly thanked as well.

\bibliographystyle{alpha}

\end{document}